\documentclass{article}
\usepackage{amsmath,dsfont, amssymb, amsthm}
\usepackage{color, hyperref}
\usepackage{graphicx,psfrag,epsf}
\usepackage{booktabs}
\usepackage{amsmath}
\usepackage{graphicx,psfrag,epsf}
\usepackage{enumerate}
\usepackage{natbib}
\usepackage[normalem]{ulem}

\addtolength{\oddsidemargin}{-.75in}%
\addtolength{\evensidemargin}{-.75in}%
\addtolength{\textwidth}{1.5in}%
\addtolength{\textheight}{1.3in}%
\addtolength{\topmargin}{-.8in}%

\newtheorem{proposition}{Proposition}
\newtheorem*{proposition*}{Proposition}

\theoremstyle{remark}

\newtheorem*{remark*}{Remark}
\newcommand\diag{{\rm diag}}

\title{An Orthogonally Equivariant  Estimator of the  Covariance Matrix in High Dimensions and for Small Sample Sizes }
\author{Samprit Banerjee\\
Weill Medical College of Cornell University\\
sab2028@med.cornell.edu
\and
Stefano Monni\\
American University of Beirut\\
sm150@aub.edu.lb
}

\begin{document}
\maketitle

\def\spacingset#1{\renewcommand{\baselinestretch}%
{#1}\small\normalsize} \spacingset{1}

\begin{abstract}
We introduce an estimation method of covariance matrices in a high-dimensional setting, i.e., when the dimension of the matrix, , is larger than the sample size . Specifically, we propose an orthogonally equivariant estimator. The eigenvectors of such estimator are the same as those of the sample covariance matrix. The eigenvalue estimates are obtained from an adjusted profile likelihood function derived by approximating the integral of the density function of the sample covariance matrix over its eigenvectors, which is a challenging problem in its own right. Exact solutions to the approximate likelihood equations are obtained and employed to construct estimates that involve a tuning parameter. Bootstrap and cross-validation based algorithms are proposed to choose this tuning parameter under various loss functions. Finally, comparisons with two well-known orthogonally equivariant estimators of the covariance matrix are given, which are based on Monte-Carlo risk estimates for simulated data and misclassification errors in real data analyses. In addition, Monte-Carlo risk estimates are also provided to compare our estimates of eigenvalues to those of a consistent estimator of population eigenvalues.
\end{abstract}

\spacingset{2}

\section{Introduction}
Many multivariate methods require an estimate of the covariance matrix. In this paper, we are interested in the problem of estimating the covariance matrix of  a multivariate normal distribution,     ${\mathcal N}(0, \Sigma)$,   using a sample  of mutually independent draws  $X_1, \ldots, X_n$,  from  it,   when  $n$ is less than the dimension $p$ of $\Sigma$.  This problem has received much attention in the recent past  because of an increasing number of applications  where  measurements are collected  on a large  number of variables, often  greater than the available  experimental units. The  sample covariance matrix  is not a good estimator in this case. In the general framework where both $p$ and $n$ go to infinity in such a way that their ratio $p/n$ converges  to a positive finite constant (often referred to as the large-dimensional asymptotic regime), the sample covariance matrix, its eigenvalues and its eigenvectors cease to be consistent.
Some  alternative estimators have thus been proposed  in the literature. \cite{LWSpectrum} propose estimators of the eigenvalues of the covariance matrix in the large-dimensional  framework that are consistent,  in the sense that the mean squared deviation between the estimated eigenvalues and the population eigenvalues converges to zero almost surely. Their method is based on  a particular discretization  of a version of the  Mar\v{c}enko-Pastur equation that links the limiting spectral distribution of the sample eigenvalues and that of the population eigenvalues \citep{LWQuest}.  This method is  then  used to derive estimators of the covariance matrix itself  that are asymptotically optimal with respect to a given loss function in the   space of  orthogonally equivariant estimators \citep{LWbernoulli}.  Additional results by these same authors have very recently appeared when our paper was under review  \citep{LW2020}.  Estimators  that are derived in the large-dimensional asymptotic regime are proposed  also by  \cite{elkaroui}, \cite{mestre}, \cite{yaokammounnajim}, among others.
Estimators that deal with the case $p>n$ and are  derived in a decision-theoretic framework are those of  \cite{konno}, and, more recently, \cite{tsukuma}. There is a vast literature on estimation of $\Sigma$   where  structural assumptions on $\Sigma$ are made such as ordering or  sparsity,  for example   \cite{BL.band, bien-tibs,  naul_taylor, won_bala}.

In this paper, we propose  an estimator for the covariance matrix  that is equivariant under  orthogonal transformations.  In particular,  these transformations include rotations of the variable axes.    Equivariant estimation of the covariance matrix under the orthogonal group has  been studied extensively (e.g.,  \cite{DeySri, LWSpectrum, takemura}) since the  pioneering work of \cite{Stein_Rietz, Stein_Russian2}.  In this study, we follow our previous work \cite{imple}, where we describe estimators that are  valid when $n>p$,   and extend it to the case when $p>n$.  Because of the property of equivariance, the eigenvectors of the covariance matrix are estimated by those of $S=\sum_{i=1}^nX_iX_i^\top$, to which we refer  as the sample covariance matrix in this paper.  Thus, the estimation problem is completed by providing estimates of the eigenvalues. These estimates are obtained from  an adjusted  profile likelihood function  of the population eigenvalues, which is derived by approximating the integral  of the density function of $S$ over its  eigenvectors (corresponding to the non-zero eigenvalues).  This approximation is however not the large-$n$ (Laplace) approximation of such integral, which results in the modified profile likelihood of \cite{barnd},  but it is an approximation  suggested in \cite{hikami-brezin} useful for large $p$.  Our estimates  are  a mixture $\hat{\lambda}^\kappa= (1-\kappa)\hat{\lambda}^{0}+\kappa \hat{\lambda}^{1}$  of an exact critical point $\hat{\lambda}^{0}$ of such a  likelihood function, which is in fact a maximum when some conditions are satisfied, and an approximate critical point $\hat{\lambda}^{1}$ whose components are  a modification of the non-zero sample eigenvalues by terms of order $1/p$.  The tuning parameter $\kappa$  is determined from the data and  controls the shrinkage of the eigenvalues. We will describe two algorithms  to determine $\kappa$: one based on bootstrap  and one based on  cross-validation.  High-dimensional estimators are generally derived under an asymptotic regime in which both   $n$  and $p$ increase in such a way that  their ratio  tends to a constant. In our case, $n$ is kept  fixed, and  the high-dimensionality of the estimator comes into play  because we consider a  large-$p$  approximation of the marginal density of the eigenvalues of $S$.

 In a variety of finite-sample simulation scenarios, we compare our estimator to two Ledoit-Wolf estimators, which are also orthogonally equivariant and  have previously  been shown to better many other estimators under some loss functions. Figures \ref{fig:PRIAL_LW1} and \ref{fig:PRIAL_LW2} summarize the results of our comparison, in terms of risk evaluated with respect to  nine loss functions.  To assess how our  covariance matrix estimator estimates the eigenvalues of the population covariance matrix,  we also compare the eigenvalues of our estimator  with   Ledoit-Wolf consistent estimator of the population eigenvalues themselves,  under  loss functions that depend only on the eigenvalues. Figure \ref{fig:PRIAL_LW3} displays such a comparison. Finally,   comparison of covariance matrix estimators is also carried out using two real data applications of breast cancer data and leukemia data: in a linear discriminant analysis of these data, we use plugged-in estimates of the covariance matrix in the classifier and demonstrate that our estimator leads to lower misclassification errors in the breast cancer data and similar misclassification errors in the leukemia data.  The two Ledoit-Wolf covariance matrix estimators are optimal asymptotically under two loss functions, but we show that finite sample improvements and improvements under other loss functions are indeed possible. Since the tuning parameter $\kappa$ of our estimator can be chosen by minimizing risk estimates with respect to any loss function, our estimator can be used with any loss function appropriate to a statistical application.

   This paper is organized as follows. In Section \ref{sec:generalities}, we introduce the adjusted profile likelihood that is used to obtain our estimator, which is introduced  and discussed in Section \ref{sec:estimator}.  Section \ref{sec:simulation}   and Section \ref{sec:LDA}  present  some numeric assessment of the performance of our estimator in  simulated  and real data respectively.

\section{Marginal Density and Likelihood Function}\label{sec:generalities}
In this section, we introduce some notations,  review the singular Wishart distribution, derive an approximation to the marginal density of the sample eigenvalues and then obtain an adjusted profile likelihood for the eigenvalues of the population covariance matrix. Consider the case of mutually independent  draws $X_1, \ldots, X_n$ from a multivariate $p$-dimensional normal distribution $\mathcal{N}(0, \Sigma)$,  with $\Sigma$ a $p \times p$ maximum-rank  positive definite matrix.  Assume $p>n$, and let $S$  be  the $p\times p$ sample covariance matrix   $X^\top X$,  with $X$ the matrix whose rows are the vectors $X_i^\top$.  $S$  is positive semi-definite and of  maximum rank, with  distinct positive eigenvalues: $\ell_1> \ell_2> \ldots> \ell_n>0$.  Geometrically, $S$  is an interior point of an $(n+1)n/2$-dimensional face of  the closed convex cone of semi-positive definite $p\times p$ symmetric matrices \citep{Barvinok}. \cite{uhlig} showed that  $S$  has  a distribution specified by the density
\begin{equation}\label{uhlig:density}
p(S)(dS) =K \left({\rm det} \Sigma \right)^{-n/2} {\rm etr}\left(-\Sigma^{-1}S/2 \right)\left({\rm det} L \right)^{(n-p-1)/2} (dS),
\end{equation}
where $K=\pi^{(-pn+n^2)/2} 2^{-pn/2}/\Gamma_n(n/2)$, $L=\diag(\ell_1,\ldots,\ell_n)$ is the diagonal matrix of the non-zero eigenvalues of $S$, ${\rm etr}(.)={\rm exp}({\rm tr}(.))$, and $(dS)$ is the volume element on the space of positive semi-definite $p \times p$ symmetric matrices of rank $n$,  with $n$ distinct positive eigenvalues. This distribution, which  extends the usual  ($n>p$)  Wishart distribution, is often called (non-central) singular Wishart distribution, but some authors \citep{SK_MVA}  prefer the name  non-singular pseudo-Wishart distribution. It corresponds to the case 7 of the classification scheme  of \cite{DG_GJ_Mardia} (described in Table 1 therein), where  generalizations are considered that include the cases when $\Sigma$ and  $S$ have non-maximum rank and when the samples are not independent.   The method  we will present can be also extended  to the  case when $S$ does not have maximum rank.  For example, in some applications, one may wish to center the observations to have mean zero. The  resulting matrix $S$ constructed from the centered data  will  have rank less than $n$.

Consider now the singular value decomposition of $X=UL^{1/2}H_1^\top$  with $U \in O(n)$ an orthogonal $n \times n$ matrix,   $L=\diag(\ell_1, \ldots, \ell_n)$   as defined above  and   $H_1$ the matrix whose $n$ columns are  the corresponding $n$  eigenvectors of $S$.   These $n$ eigenvectors are  uniquely determined up to column multiplication by $\pm 1$. The formulae below assume that one of these $2^n$ choices has been made. $H_1$ is a point in  the Stiefel manifold, $V_n({\mathbb R}^p)$, of all orthonormal  $n$-frames in ${\mathbb R}^p$. The joint density of $H_1$ and $L$ is
\[
p(H_1, L) \, (H_1^\top dH_1) \wedge (dL)
\]
where
\[
p(H_1, L) = 2^{-n} K \left({\rm det} \Sigma \right)^{-n/2} {\rm etr}\left(-\Sigma^{-1} H_1LH_1^\top/2 \right)\left({\rm det} L \right)^{(p-n-1)/2} \prod_{i<j}^n(\ell_i-\ell_j),
\]
$\wedge$ is the exterior product, $(dL)= d\ell_1 \wedge\cdots \wedge d\ell_n$,  and  $(H_1^\top dH_1)$ is  the  Haar measure of $V_n({\mathbb R}^p)$ normalized  as follows
\[
\int_{V_n({\mathbb R}^p)} (H_1^\top dH_1)= {\rm Vol}(V_n({\mathbb R}^p)) = \frac{2^n \pi^{pn/2}}{\Gamma_n(\frac{1}{2}p)}.
\]
Even if  we have written  the  densities of $S$ and of $H_1, L$  as differential forms, we  need   not keep track of the  sign of the  form, if we define  the integrals  to be positive. In the following, when the measure is clear, we will revert to using the term  density for  the scalar  part.

We are interested in obtaining an estimator for  $\Sigma$ that belongs to  the class of  orthogonally equivariant estimators. This equivariance is intended in the usual meaning.  Namely,  consider the action of the orthogonal group $O(p)$ on the sample space  that is defined by  $X \mapsto X G^\top$, or equivalently, by  $S \mapsto GSG^\top$.  We require   $\hat{\Sigma}(S) \mapsto G \hat{\Sigma}(S) G^\top$, which is the same way as $\Sigma$   transforms $\Sigma \mapsto G\Sigma G^\top$.   Under such action,  $p(S)$ is invariant and so is  the measure $(dS)$. This equivariance implies that the eigenvectors of  $\hat{\Sigma}$  are the same as those of $S$.  This is to say that the estimators are of the form  $\hat{\Sigma} =H \hat{\Lambda} H^\top$, where the elements of the  diagonal matrix  $\hat{\Lambda}$ are functions of the non-zero eigenvalues  of $S$  and   the orthogonal matrix $H =[H_1: H_2]$  is that of the eigenvectors of $S$ \citep{Stein_Russian2}.  In the case  in which $p>n$,  the zero eigenvalue has multiplicity $(p-n)$, so that the  corresponding eigenvectors of $S$   given by the $(p-n)$ columns $H_2=(h_{n+1}, \ldots, h_p)$,   are unique only up to an orthogonal transformation of the  last $(p-n)$ coordinate axes. Namely, we can consider $H_2$ or  $H_2\cdot P$ with any orthogonal matrix $P\in O(p-n)$. In general each different choice (of $P$) will lead to a different estimator of $\Sigma$. However,  if the estimates of the smallest $(p-n)$ eigenvalues of ${\Sigma}$   are  identical,  all such choices will be immaterial, in the sense that they will lead to the same $\hat{\Sigma}$. The estimates of the last $(p-n)$ eigenvalues that we propose   are indeed identical, so that one can use as estimates of all the eigenvectors of $\Sigma$  whatever representative  of the class of the eigenvectors of $S$ a numerical routine outputs.

To find the eigenvalue estimates,  we follow our previous paper \cite{imple}, and consider the marginal density of the sample roots of $S=X^\top X =H_1 L H_1^\top$:
\[
p(\ell_1, \ldots, \ell_n)=2^{-np/2} \frac{\pi^{(-pn+n^2)/2}}{2^{n}\Gamma_n(n/2)}  \left({\rm det} \Sigma \right)^{-n/2}
 \prod_{i=1}^n \ell_i^{(p-n-1)/2} \prod_{i<j}^n(\ell_i-\ell_j) \cdot J_n
\]
where
\[
J_n=\int_{V_n({\mathbb R}^p)} {\rm etr}\left(-\frac{1}{2}\Sigma^{-1} H_1 L H_1^\top \right) \,  (H_1^\top dH_1).
\]
The integral $J_n$ cannot  be computed in closed form. However,  in Appendix A, we derive a useful approximation. Namely,
\begin{proposition}\label{prop:eq:largep}
For large $p$, the integral $J_n$ is approximated by the following expression
\[
J_n  \approx   \exp\left(-\frac{1}{2}\sum_{i=1}^n \frac{ \hat{\ell}_i}{\lambda_i} \right) \cdot \prod_{i<j}^p \left( 1 + \frac{1}{p} \left(\frac{\lambda_i -\lambda_j}{\lambda_i \lambda_j}\right)(\hat{\ell}_i - \hat{\ell}_j)\right)^{-\frac{1}{2}}
\]
with $\hat{\ell}_i=\ell_i$ for $1 \leq i \leq n$ and $\hat {\ell}_i=0$ for $i>n$.
\end{proposition}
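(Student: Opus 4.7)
The plan is to convert $J_n$ into a Harish--Chandra-type integral over the full orthogonal group $O(p)$, and then invoke the large-$p$ expansion of the orthogonal Harish-Chandra--Itzykson--Zuber (HCIZ) integral due to Brezin and Hikami.

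\emph{Step 1: padding.} I would extend the $n\times n$ diagonal matrix $L$ to the $p\times p$ diagonal matrix $\hat L=\diag(\hat\ell_1,\dots,\hat\ell_p)$, with $\hat\ell_i=\ell_i$ for $i\le n$ and $\hat\ell_i=0$ otherwise. For any $H=[H_1\!:\!H_2]\in O(p)$, one has $H\hat L H^\top=H_1 L H_1^\top$, which is independent of $H_2$. Since $V_n(\mathbb{R}^p)\cong O(p)/O(p-n)$ and the Haar measure factors accordingly, the $H_2$-integration just produces a factor $\mathrm{Vol}(O(p-n))$, giving
\[
J_n \;=\; \frac{1}{\mathrm{Vol}(O(p-n))}\int_{O(p)} {\rm etr}\!\left(-\tfrac12\Sigma^{-1}H\hat L H^\top\right)(H^\top dH).
\]

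\emph{Step 2: saddle point and Gaussian fluctuations.} The right-hand side is an orthogonal HCIZ-type integral with spectral data $\{1/\lambda_i\}$ for $\Sigma^{-1}$ and $\{\hat\ell_i\}$ for $\hat L$. Unlike the unitary case there is no closed form, so I would follow Brezin--Hikami and expand about the saddle $H=I$. Writing $H=\exp(A)$ with $A$ antisymmetric and expanding to second order,
\[
\tfrac12\,\mathrm{tr}\!\left(\Sigma^{-1}H\hat L H^\top\right) \;=\; \tfrac12\,\mathrm{tr}\!\left(\Sigma^{-1}\hat L\right) \;+\; \tfrac12\sum_{i<j}^p a_{ij}^2\,\frac{(\lambda_i-\lambda_j)(\hat\ell_i-\hat\ell_j)}{\lambda_i\lambda_j} \;+\; O(A^3).
\]
The constant term gives the first factor $\exp(-\tfrac12\sum_{i=1}^n\ell_i/\lambda_i)$. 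Performing the Gaussian integral over the $a_{ij}$'s produces $(\det Q/2\pi)^{-1/2}$ of a diagonal quadratic form; after combining with the Haar volumes and the $\mathrm{Vol}(O(p-n))$ denominator, the Brezin--Hikami rescaling $a_{ij}\mapsto a_{ij}/\sqrt p$ absorbs the $p$-dependence of the measure and recasts each Gaussian factor as $(1+\tfrac{1}{p}(\lambda_i-\lambda_j)(\hat\ell_i-\hat\ell_j)/(\lambda_i\lambda_j))^{-1/2}$, yielding exactly the product in the statement. For pairs with $i,j>n$ both $\hat\ell_i$'s vanish and the factor is trivially $1$, so the product effectively runs over pairs with at least one index in $\{1,\dots,n\}$.

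\emph{Main obstacle.} The hard part is justifying the Brezin--Hikami expansion rigorously enough to write the displayed ``$\approx$'' with some remainder control as $p\to\infty$ with $n$, $\Sigma$ and $L$ held fixed: (i) isolating the identity saddle from the other $2^n n!$ critical points of signed coordinate-permutation type and showing that their contributions are subleading once the eigenvalues are suitably ordered and separated; (ii) uniformly bounding the cubic-and-higher remainder in $A$ so that the Gaussian approximation survives; and (iii) tracking the cancellation of the measure factors so that exactly the symmetric combination $(\lambda_i-\lambda_j)(\hat\ell_i-\hat\ell_j)/(\lambda_i\lambda_j)$ appears inside the product. Once these technical points are under control, the structure of the answer follows transparently from the quadratic expansion above.
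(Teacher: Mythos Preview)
Your proposal is correct and follows essentially the same route as the paper: reduce the Stiefel integral $J_n$ to an integral over the full group $O(p)$ by padding $L$ to a $p\times p$ diagonal matrix (the paper pads with a parameter $\ell>0$ and then sends $\ell\to0$, invoking Lemma~9.5.3 of Muirhead, whereas you pad directly with zeros --- a cosmetic difference), and then apply the Hikami--Br\'ezin large-$p$ approximation for the orthogonal HCIZ integral. The only distinction is that the paper simply \emph{cites} the Hikami--Br\'ezin product formula $\prod_{i<j}(1+\tfrac{2}{p}\tau_{ij})^{-1/2}$ as a known result (after summarizing its derivation via the $\tau$-series resummation), while you sketch its saddle-point/Gaussian origin yourself; both arrive at the same factor and both discard the constant $\mathrm{Vol}(O(p))/\mathrm{Vol}(O(p-n))$.
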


\begin{remark*}
The  following condition  $0 \leq (l_i-l_j) (\lambda_i -\lambda_j)/\lambda_i\lambda_j< p$ is employed in the proof of the proposition.
\end{remark*}

\begin{remark*}
This approximate formula is valid even when some of the population eigenvalues are equal. It is also interesting to notice that when $\Sigma=\mu I_p$ is proportional to the identity matrix, the   value of $J_n$ obtained using such formula is equal to the exact value of the integral, which can be computed to be $\exp(-\sum_{i=1}^n  \ell_i/2\mu)$ times the volume of the Stiefel manifold,  which is omitted in the  formula above (see Appendix A). In the following, however, we assume the population eigenvalues to be distinct  except when stated otherwise.
\end{remark*}

Employing such an approximation, we then obtain an approximate log-likelihood  function  for the true eigenvalues $\lambda$
\begin{eqnarray}\label{eq:loglik2}
{\mathcal L}(\lambda)  &=& -\frac{n}{2} \sum_{i=1}^p \ln {\lambda_i} -\frac{1}{2} \sum_{a=1}^n\frac{\ell_a}{\lambda_a}  \nonumber\\
 &&  -\frac{1}{2} \sum_{1\leq a<b \leq n} \ln \left( 1 + \frac{1}{p} \left(\frac{\lambda_a -\lambda_b}{\lambda_a \lambda_b}\right)(\ell_a - \ell_b)\right)\\
&&  -\frac{1}{2} \sum_{1 \leq a \leq n<r \leq p } \ln \left( 1 + \frac{1}{p} \left(\frac{\lambda_a -\lambda_r}{\lambda_a \lambda_r}\right)\ell_a\right) \nonumber.
\end{eqnarray}

The first two terms of this function are the profile log-likelihood function for the parameters $\lambda$,  which is the partially maximized log-likelihood function of $(\lambda, V)$,   where $V$ is replaced by the maximum likelihood estimator $\hat{V}_{\lambda}$ for fixed $\lambda$. We show in Appendix C that $\hat{V}_{\lambda}$ is  a solution to  the equation $V^\top H_1=M$, where $M$ is a $p \times n$ matrix such that $M_{ij}=\pm \delta_{ij}$, with $\delta_{ij}$    the Kronecker delta.  The other terms in ${\mathcal L}$  can thus be interpreted as an adjustment to the profile log-likelihood.

\section{The Proposed Estimator}\label{sec:estimator}
In this section, we derive an estimator for the eigenvalues of the population covariance matrix and discuss its properties.

Our starting point is ${\mathcal L}(\lambda)$, the function given in (\ref{eq:loglik2}), which can be  considered as a pseudo-(log)likelihood, of which our goal is to find the maximum points. We note that  this function  of $\lambda \in \mathbb{R}_{++}^p$  is not  concave   on the whole  domain $\mathbb{R}_{++}^p$, for all given values of $(\ell_1, \ldots, \ell_n)$.   The critical points are  the solutions to  the following equations
\begin{eqnarray}\label{eq:mle}
n \lambda_i   & =&    \ell_i-\frac{1}{p}  \sum_{b=1}^n\frac{\ell_i - \ell_b}{   1 + \frac{1}{p} (\frac{\lambda_i -\lambda_b}{\lambda_i \lambda_b})(\ell_i - \ell_b)     }
-\frac{\ell_i}{p}\sum_{r=n+1}^p \frac{1}{   1 + \frac{1}{p} (\frac{\lambda_i -\lambda_r}{\lambda_i \lambda_r})\ell_i   }
\end{eqnarray}
where $\ell_i=0$ when $i=n+1, \ldots, p$. Exact solutions to (\ref{eq:mle}) satisfy what we can call a trace condition: $n\sum_{i=1}^p \lambda_i =\sum_{a=1}^n \ell_a$, which is desirable since  $E(S)= n\Sigma$.

A solution to (\ref{eq:mle}) is seen to be given by    $\hat{\lambda}_i^0=\sum_{a=1}^n\ell_a/np$, for $i=1, \ldots, p$.  This    results in a diagonal estimator  for the covariance matrix $\hat{\Sigma}^0 = {\rm tr}(S) \cdot I_p /np$. However there is no guarantee  that such a solution
may be  a maximum of ${\mathcal L}$   for all given values of $(\ell_1, \ldots, \ell_n)$. Indeed, for some data (sample eigenvalues)   the Hessian matrix evaluated at this solution  can  have positive eigenvalues.  However, one can notice  that $\hat{\lambda}^0$  is a genuine maximum of the likelihood function  at order $1$  in $1/p$.    Namely, it is a  maximum of
\[
{\mathcal L}' =-n \sum_{i=1}^p \ln {\lambda_i} -\sum_{a=1}^n\frac{\ell_a}{\lambda_a}  -\frac{1}{p}\sum_{1\leq a<b \leq n} \frac{\lambda_a -\lambda_b}{\lambda_a \lambda_b}(\ell_a - \ell_b) -\frac{1}{p}\sum_{1 \leq a \leq n<r \leq p } \frac{\lambda_a -\lambda_r}{\lambda_a \lambda_r}\ell_a,
\]
which is obtained  by expanding to first order   the logarithm in (\ref{eq:loglik2}). Furthermore, $\hat{\lambda}^0$  is also the  maximum of the exact likelihood function  when  the true covariance matrix  is  proportional to the identity matrix, which can be computed
exactly,  without the need of any approximation, as remarked earlier.   The advantage of the solution $\hat{\lambda}^0$ is  that it shrinks the highest  and pushes up the lowest eigenvalue. Indeed,  the general theorem of \cite{vandervaart} tells us that the highest eigenvalue $\ell_i/n$ is upward biased and (obviously in this $p>n$ case) the lowest eigenvalue downward biased.  The disadvantage is that the shrinkage may be too extreme.   It is perhaps not surprising that the eigenvalue estimates are degenerate. In our derivation, the sample size $n$ is fixed and $p$ is  large, thus there may not be enough information in the sample to obtain a different estimate for each eigenvalue. To deal with the degeneracy of the estimates,  we construct  approximate solutions to the equations (\ref{eq:mle}).  We look for   $\hat{\lambda}_i$ with an expansion of the form $a_i +f_i/p +O(p^{-2})$, for $i=1, \ldots, n$.  There is no  reason a priori why   solutions should take this form. However, the  solution  $\hat{\lambda}_i^0$ is of this form, with $a_i=0$ and $f_i$  the same for all  $i=1, \ldots, n$.  Our goal is to  perturb the exact solution $\hat{\lambda}^0$ away from  having all components  equal, keeping the resulting estimates ordered and satisfying the trace condition. We  find
\begin{eqnarray*}\label{eq:appxestimate}
\hat{\lambda}^1_a   & =&   \frac{ \ell_a}{n}- \frac{1}{n} \sum_{b=1}^n \frac{\ell_a - \ell_b}{  p +n \left(\frac{1}{\ell_b} -\frac{1}{\ell_a}\right)(\ell_a - \ell_b) }, \quad \quad  a=1, \ldots, n \nonumber \\
\hat{\lambda}^1_r &=&  0,  {\hskip 5.5cm} r=n+1, \ldots, p.
\end{eqnarray*}
which  are  a modification of the eigenvalues $\ell_i/n$ of the (usual) sample covariance matrix $S/n$ (in our conventions $\ell_i$ are the eigenvalues of $S=X^\top X$)
with a correction term of order $1/p$.   We do not use such an approximate solution as the estimate of our eigenvalues (it would lead to a non-invertible estimator of the covariance matrix, for one thing).  We employ  $\hat{\lambda}^1$ as a perturbation of the true solution $\hat{\lambda}^0$. In fact, what we  propose as  an estimator  is  a linear combination of  $\hat{\lambda}^0$ and $\hat{\lambda}^1$,  controlled by a tuning parameter $\kappa$. Namely,
\begin{eqnarray}\label{eq:bmestimate}
\hat{\lambda}^{\kappa}_a   & =&   \frac{\kappa}{n}  \left( \ell_a-  \sum_{b=1}^n \frac{\ell_a - \ell_b}{  p +n \left(\frac{1}{\ell_b} -\frac{1}{\ell_a}\right)(\ell_a - \ell_b) }\right) + (1-\kappa)  \frac{\sum_{a=1}^n \ell_a}{np},  \, a=1, \ldots, n \nonumber \\
\\
\hat{\lambda}^{\kappa}_r &=& (1-\kappa)  \frac{\sum_{a=1}^n \ell_a}{np},  {\hskip 5.5 cm} r=n+1, \ldots, p  \nonumber
\end{eqnarray}
where $0 \leq \kappa < 1$. The parameter $\kappa$  controls the  shrinkage of the eigenvalue estimate and is to be determined from the data. When $\kappa$ is zero, the shrinkage is highest, and we recover the solution to the ML  equations,  with all the eigenvalues being equal and, accordingly,   $\hat{\Sigma}=\hat{\Sigma}^0$ is  proportional to the identity $I_p$. When $\kappa$ tends to one we get $\hat{\lambda}^1$ with distinct estimates of the first $n$ eigenvalues.  There is no guarantee that $\hat{\lambda}^{\kappa}$ is a maximizer for (\ref{eq:loglik2}).  For these reason, one could   try to maximize  (\ref{eq:loglik2})  numerically.   We did consider numerical solutions to (\ref{eq:mle}) using Newton's method and a constraint of positivity on the solutions. The resulting roots    were always found to  be close to $\hat{\lambda}^1$ with  the last $(p-n)$ values  negligible.  Furthermore,  when used in place of $\hat{\lambda}^1$, in the estimator (\ref{eq:bmestimate}), these numerical components had similar estimates of risk compared to our estimator  (\ref{eq:bmestimate}) in the simulation study conducted in Section \ref{sec:SimStudy} (results not shown), but  added an un-necessary  computational step.

The  last $(p-n)$ estimates  of the eigenspectrum (\ref{eq:bmestimate})   are all equal. As  observed in Section \ref{sec:generalities}, this property guarantees that  any chosen basis for  the eigenspace  corresponding to the zero eigenvalues of  $S$  will  give rise to the same  estimator $\hat{\Sigma}^\kappa$. In fact,  this property should be required of any orthogonally equivariant estimators of $\Sigma$ in the $p>n$ setting, although it has not been explicitly mentioned before, and  it also holds for the non-linear estimators of \cite{LWSpectrum}. Our proposed estimators for the true eigenvalues  have the following additional properties proven in Appendix B.
\begin{proposition}[The  properties of the eigenvalue estimates] \label{prop:properties}
For $\kappa$, such that $0 \leq \kappa <1$, the estimates $\hat{\lambda}^\kappa$  given in (\ref{eq:bmestimate}) have the following properties
\begin{itemize}
\item[a)]{  $\hat{\lambda}^\kappa_1 > \hat{\lambda}^\kappa_2 >  \ldots > \hat{\lambda}^\kappa_n > \hat{\lambda}^\kappa_{n+1}= \ldots =\hat{\lambda}^\kappa_p >0$}
\item[b)]{n $\sum_{i=1}^p \hat{\lambda}^\kappa_i = \sum_{a=1}^n \ell_i$}
\end{itemize}
\end{proposition}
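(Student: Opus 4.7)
The plan is to decouple the claim into three pieces: the trace identity in (b), and then positivity and strict ordering in (a). The single algebraic observation that powers everything is that the denominator $p+n(\tfrac{1}{\ell_b}-\tfrac{1}{\ell_a})(\ell_a-\ell_b)=p+n(\ell_a-\ell_b)^{2}/(\ell_a\ell_b)$ appearing in (\ref{eq:bmestimate}) is symmetric in $(a,b)$, while the numerator $\ell_a-\ell_b$ is antisymmetric.

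For (b), I would sum $\hat{\lambda}^{\kappa}_i$ over all $p$ indices. The double sum
\[
\sum_{a,b=1}^{n}\frac{\ell_a-\ell_b}{p+n(\ell_a-\ell_b)^{2}/(\ell_a\ell_b)}
\]
vanishes by the symmetric/antisymmetric cancellation, collapsing $n\sum_{a=1}^n\hat{\lambda}^{\kappa}_a$ to $\kappa\sum_a\ell_a + n(1-\kappa)\sum_a\ell_a/p$. Adding the contribution $(p-n)(1-\kappa)\sum_a\ell_a/(np)$ of the last $p-n$ coordinates and using $n+(p-n)=p$ yields $\sum_a\ell_a/n$, which is precisely (b).

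For (a), I would first establish positivity. For $a\le n$, rewrite
\[
n\hat{\lambda}^{1}_a = \ell_a\Bigl(1-\sum_{b=1}^n w_{ab}\Bigr)+\sum_{b=1}^n \ell_b w_{ab},\qquad w_{ab}=\frac{\ell_a\ell_b}{p\ell_a\ell_b+n(\ell_a-\ell_b)^2},
\]
noting that $w_{ab}=w_{ba}\in(0,1/p]$ and $w_{aa}=1/p$, so $\sum_b w_{ab}\le n/p<1$. Both terms on the right are then strictly positive, giving $\hat{\lambda}^{1}_a>0$ and hence $\hat{\lambda}^{\kappa}_a=\kappa\hat{\lambda}^{1}_a+(1-\kappa)\mathrm{tr}(S)/(np)>0$. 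Positivity of the last $p-n$ coordinates is immediate from $\kappa<1$ and $\mathrm{tr}(S)>0$, and the gap $\hat{\lambda}^{\kappa}_n-\hat{\lambda}^{\kappa}_{n+1}=\kappa\hat{\lambda}^{1}_n$ is strictly positive as soon as $\kappa>0$.

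The real work is the strict ordering of the top $n$ coordinates. Since $\hat{\lambda}^{\kappa}_a-\hat{\lambda}^{\kappa}_b=\kappa(\hat{\lambda}^{1}_a-\hat{\lambda}^{1}_b)$ for $a,b\le n$, it suffices to prove $\hat{\lambda}^{1}_a>\hat{\lambda}^{1}_b$ whenever $\ell_a>\ell_b$. My strategy is to compute
\[
n(\hat{\lambda}^{1}_a-\hat{\lambda}^{1}_b) = (\ell_a-\ell_b)\Bigl(1-\sum_c w_{ac}\Bigr)-\sum_c(\ell_b-\ell_c)(w_{ac}-w_{bc}),
\]
then use the explicit identity $w_{ac}-w_{bc}=n\ell_c(\ell_a-\ell_b)(\ell_c^{2}-\ell_a\ell_b)/(X_cY_c)$, with $X_c=p\ell_a\ell_c+n(\ell_a-\ell_c)^2$ and $Y_c=p\ell_b\ell_c+n(\ell_b-\ell_c)^2$, to factor $\ell_a-\ell_b$ out of the whole expression, leaving $n(\hat{\lambda}^{1}_a-\hat{\lambda}^{1}_b)=(\ell_a-\ell_b)\,G$ for an explicit rational function $G$ of the $\ell_c$'s. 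The hard part will be verifying that $G>0$ for every admissible data vector: the indices $c\in\{a,b\}$ give a clean contribution reducing to multiples of $w_{ab}$, while for $c\notin\{a,b\}$ the factor $(\ell_c^{2}-\ell_a\ell_b)(\ell_b-\ell_c)$ is of indefinite sign and must be dominated by the leading $1-\sum_c w_{ac}\ge 1-n/p>0$. I expect this factor-positivity step, which requires a term-by-term bound on the residual sum using $X_cY_c\ge p^2\ell_a\ell_b\ell_c^2$, to be the most intricate calculation and the principal obstacle in the proof.
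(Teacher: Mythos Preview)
Your treatment of (b) and of positivity in (a) is correct and, for positivity, arguably cleaner than the paper's: the rewrite $n\hat{\lambda}^{1}_a = \ell_a(1-\sum_b w_{ab})+\sum_b \ell_b w_{ab}$ with $0<w_{ab}\le 1/p$ and $\sum_b w_{ab}\le n/p<1$ makes positivity immediate.

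The gap is in the strict-ordering step. Your proposed term-by-term bound via $X_cY_c\ge p^{2}\ell_a\ell_b\ell_c^{2}$ is too crude to control the residual sum. Take $n=2$, $p=3$, $a=1$, $b=2$, $\ell_1=M$ large, $\ell_2=1$: the only nonzero residual term (at $c=1$) has true size about $1/3$, yet your bound yields $2(M-1)^{2}/(9M)\to\infty$ as $M\to\infty$. The point is that when $\ell_c$ is far from $\ell_a$ or $\ell_b$, the dominant piece of $X_cY_c$ is $n^{2}(\ell_a-\ell_c)^{2}(\ell_b-\ell_c)^{2}$, not $p^{2}\ell_a\ell_b\ell_c^{2}$, so no single lower bound on $X_cY_c$ handles all regimes. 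The factorisation $n(\hat\lambda^{1}_a-\hat\lambda^{1}_b)=(\ell_a-\ell_b)G$ is correct, but establishing $G>0$ by crude estimation does not go through as sketched.

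The paper avoids this entirely by comparing \emph{consecutive} indices and exploiting a monotonicity property of the denominator $\psi_{a,b}=p+n(\ell_a-\ell_b)^{2}/(\ell_a\ell_b)$: for fixed $b$ one has $\psi_{a+1,b}>\psi_{a,b}$ when $b\le a$ and $\psi_{a,b}>\psi_{a+1,b}$ when $b\ge a+1$ (since $x\mapsto(x-\ell_b)^{2}/x$ is decreasing on $(0,\ell_b)$ and increasing on $(\ell_b,\infty)$). Starting from $n\hat\lambda^{1}_a=\ell_a-\sum_b(\ell_a-\ell_b)/\psi_{a,b}$, one writes $\ell_a=\ell_{a+1}+d_a$, uses $\sum_b 1/\psi_{a,b}<n/p$ to discard the $d_a$ piece, and then, in each range of $b$, replaces $\psi_{a,b}$ by $\psi_{a+1,b}$ in the direction that only decreases the expression; what is left is $n\hat\lambda^{1}_{a+1}$ plus a sum over $b\ge a+2$ of manifestly nonnegative terms. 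No estimation of rational functions is needed: the inequality is a consequence of monotonicity alone. That is the idea your sketch is missing.
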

\noindent
Thus the corresponding estimator $\hat{\Sigma}_\kappa$ of $\Sigma$ will be positive definite. In addition, because of the ordering of the estimates, there is no additional step,  such as re-ordering or isotonization, that often is necessary.
The computational burden of obtaining the proposed estimates only stems from finding  the singular value decomposition of the data matrix $X$ or the eigenspectrum of $S$, and by the evaluation of the parameter $\kappa$, which we discuss in Section  \ref{sec:kappa}.

The formulae presented so far have been obtained under the assumption that the data matrix $X$ or  $S=X^\top X$ were of maximum rank $n$. In some applications one may wish to center the data, $Y_i=X_i-\bar{X}$, and consider the matrix $\sum_{i}Y_iY_i^\top$  in place of $S$.
All formulae can be applied to these situations, if we replace $n$ with the rank $q$ of the rescaled matrix in the corresponding maximum rank equations. A sketch of their derivation is given in Appendix D.

\subsection{Selecting the Tuning Parameter $\kappa$}
\label{sec:kappa}
The tuning  parameter $\kappa, \; 0 \leq \kappa < 1$,  of $\hat{\lambda}^{\kappa}$ needs to be determined from data. Selection of tuning parameters in an unsupervised setting is a  difficult problem, and there is no method which is always satisfactory. In the context of covariance estimation, tuning parameters are often determined by minimizing estimates of risks (see for example  \cite{BL.band} for a cross-validation approach and \cite{YiZou} for an approach using Stein's unbiased estimate of risk). This is also the approach we follow to choose $\kappa$, although our estimates of risk differ. Namely, we consider some loss function  $L(\hat{\Sigma},\Sigma)$ and compute  the corresponding risk as follows:
\begin{equation}\label{eq:risk}
R(\hat{\Sigma},\Sigma) = E(L(\hat{\Sigma},\Sigma)),
\end{equation}
 where  the expectation is over the data distribution. When $\hat{\Sigma}=\hat{\Sigma}^{\kappa}$,  the risk can be seen as a function of $\kappa$.  The ``oracle'' $\kappa$ is then  $\kappa' = \operatornamewithlimits{argmin}\limits_{\kappa} R(\kappa)$.

It is noteworthy that estimating the risk and estimating the tuning parameter that minimizes that risk, are not necessarily the same  problem.  The estimation of the risk  is complicated by  the fact that the true population matrix $\Sigma$ is unknown in practice.  We propose two methods  to estimate the risk of the estimator $\hat{\Sigma}^{\kappa}$ under a loss $L$: one method relies on a bootstrap re-sampling scheme and the other on cross-validation.

\subsubsection*{$\kappa$-selection via bootstrap}
To estimate the risk via bootstrap, we randomly choose $n$ rows with replacement from our data matrix $X$. Let $X_b$ be such a sample and $S_b = X_b^\top  X_b$ be  the  corresponding  sample covariance matrix. We then compute the reduced rank estimator, $\hat{\Sigma}^{\kappa}_{b}$, as described in Appendix D  and  evaluate  the loss  $L(\hat{\Sigma}^{\kappa}_{b}, \bar{\Sigma})$  with respect to a reference estimator $\bar{\Sigma}$ for a grid of values of $\kappa \in [0,1)$. This procedure is iterated $B$ times. The risk estimate is taken to be
\[
\hat{R}(\kappa)=\frac{1}{B} \sum_{b=1}^B L(\hat{\Sigma}^{\kappa}_{b}, \bar{\Sigma})
\]
and  the optimal $\kappa$ determined as
\[
\hat{\kappa} = \operatornamewithlimits{argmin}\limits_{\kappa \in [0,1)} \hat{R}(\kappa).
\]

The choice of the reference estimator  $\bar{\Sigma}$ in our proposed $\kappa$-selection procedure requires discussion.  We have considered using $S$ and $\hat{\Sigma}^1$ (matrix estimator corresponding to $\hat{\lambda}^1$) as reference estimators. However, since these estimators are singular, they may not be used when computing loss functions that require their inversions (such as  Stein's loss function or the quadratic loss function, see Section \ref{sec:simulation}). In these cases, we have flipped the role of the reference estimator and the estimator at hand  when computing the loss functions. As an alternative approach, we have used as reference estimator a non-singular extension of $\hat{\Sigma}^1$,  which we call  $\hat{\Sigma}^1_{NS}$,  where the zero eigenvalues are replaced with the smallest non-zero eigenvalue estimate. Our simulation studies  (not shown here)  indicate that the second strategy of using a non-singular estimator performs better than the first approach in selecting $\kappa$. We should emphasize that the choice of $\kappa$ closest to the $\kappa'$ depends largely on the reference estimator and a better reference estimator will lead to a much improved estimator.

\subsubsection*{$\kappa$-selection via cross-validation}
A second method  estimates the risk using  cross-validation (CV).  Different losses lead to different estimates, and we have implemented this method for Stein's, quadratic and Frobenius loss functions (see Section \ref{sec:LossF} for definition of these loss functions). First, consider the Frobenius loss, or actually its square, for reasons that we will be readily apparent:
\[
frob(\hat{\Sigma}^\kappa, \Sigma)^2 = {\rm tr}\left( (\hat{\Sigma}^\kappa-\Sigma)(\hat{\Sigma}^\kappa-\Sigma)^\top \right) = {\rm tr}(\hat{\Sigma}^\kappa \hat{\Sigma}^\kappa)  -2 {\rm tr} (\Sigma \hat{\Sigma}^\kappa)  +{\rm tr}(\Sigma^2).
\]
We can now ignore terms  that do not depend on $\kappa$ since we wish to minimize  with respect to $\kappa$. Observing further that
\[
{\rm tr} (\Sigma \hat{\Sigma}^\kappa)= E_*\left({\bf x}^\top_* \hat{\Sigma}^\kappa {\bf x}_*\right)
\]
where the expectation $E_*$ is taken with respect to the distribution of  ${\bf x}_*$, an independent sample  from ${\cal N}(0, \Sigma)$, we obtain the following leave-one-out  cross-validation estimate of the risk
\begin{eqnarray*}
\hat{{frob}}_{\kappa}^2=  {\rm tr}(\hat{\Sigma}^{\kappa} \hat{\Sigma}^{\kappa}) -\frac{2}{n}\sum_{i=1}^n  X^\top_i (\hat{\Sigma}_{\setminus i}^{\kappa})  X_i
\end{eqnarray*}
where $\hat{\Sigma}_{\setminus i}^\kappa$ is the estimator obtained removing the $i$-th row $X^\top_i$ from $X$. If $n$ is not very small, one can consider  $K$-fold CV instead of leave-one-out CV, to ease the computational burden.  When the loss function is  quadratic or Stein's, we reverse the role of $\Sigma$ and $\hat{\Sigma}$ and  consider $L(\Sigma, \hat{\Sigma}^\kappa)$  rather than $L(\hat{\Sigma}^\kappa, \Sigma)$.  We can express the traces involving $\Sigma$ and the inverse of $\hat{\Sigma}^\kappa$  as expectation with respect to samples of  $\mathcal{N}(0, \Sigma)$, and we are able to obtain the  estimates of risks, or, more accurately, of functions that have the same minimum as the risks, since terms that do not depend on $\kappa$ can be ignored. More precisely,  the quadratic loss with the reversed role of $\Sigma$ and $\hat{\Sigma}$ is
\begin{eqnarray*}
q(\Sigma, \hat{\Sigma})&=&{\rm tr} ((\Sigma \hat{\Sigma}^{-1} -I)^2)=p -2 {\rm tr}(\Sigma \hat{\Sigma}^{-1} ) +{\rm tr} (\Sigma \hat{\Sigma}^{-1}\Sigma \hat{\Sigma}^{-1})\\
&=& p-2 E_*\left({\bf x}^\top_* \hat{\Sigma}^{-1} {\bf x}_*\right)+\frac{1}{2} E_*\left({\bf x}^\top_* \hat{\Sigma}^{-1} {\bf x}_*  \cdot {\bf x}^\top _* \hat{\Sigma}^{-1} {\bf x}_*\right)\\
&&-\frac{1}{2} \left(E_*\left({\bf x}^\top_* \hat{\Sigma}^{-1} {\bf x}_*\right)\right)^2.
\end{eqnarray*}
Ignoring terms that do not depend on $\kappa$, the  leave-one-out cross-validation estimate of the risk under the  quadratic loss is
\[
\hat{q}_\kappa= -\frac{2}{n}\sum_{i=1}^n X^\top_i (\hat{\Sigma}_{\setminus i}^{\kappa})^{-1} X_i + \frac{1}{2n}\sum_{i=1}^n \left(X^\top_i (\hat{\Sigma}_{\setminus i}^{\kappa})^{-1} X_i \right)^2  -\frac{1}{2} \left( \frac{1}{n}\sum_{i=1}^n X^\top_i (\hat{\Sigma}_{\setminus i}^{\kappa})^{-1} X_i  \right)^2.
\]
The Stein's loss  $st(\Sigma, \hat{\Sigma})$, which is now  twice the  KL-divergence of normal densities with covariance matrix $\Sigma$ and $\hat{\Sigma}$, is equal, up to terms that do not depend   on $\kappa$, to
\begin{eqnarray*}
{\cal S}_\kappa &=& \frac{1}{2}E_*\left({\bf x}^\top_* \hat{\Sigma}^{-1} {\bf x}_*\right)  + \frac{1}{2} \ln \det  \hat{\Sigma}^{\kappa},
\end{eqnarray*}
from which one obtains the leave-one-out cross-validation   risk estimate
\begin{equation*}
\hat{st}_\kappa = \frac{1}{2n}\sum_{i=1}^n X^\top_i (\hat{\Sigma}_{\setminus i}^{\kappa})^{-1} X_i  + \frac{1}{2}\ln \det \hat{\Sigma}^{\kappa}.
\end{equation*}

\begin{figure}
\includegraphics[scale=1,page=1]{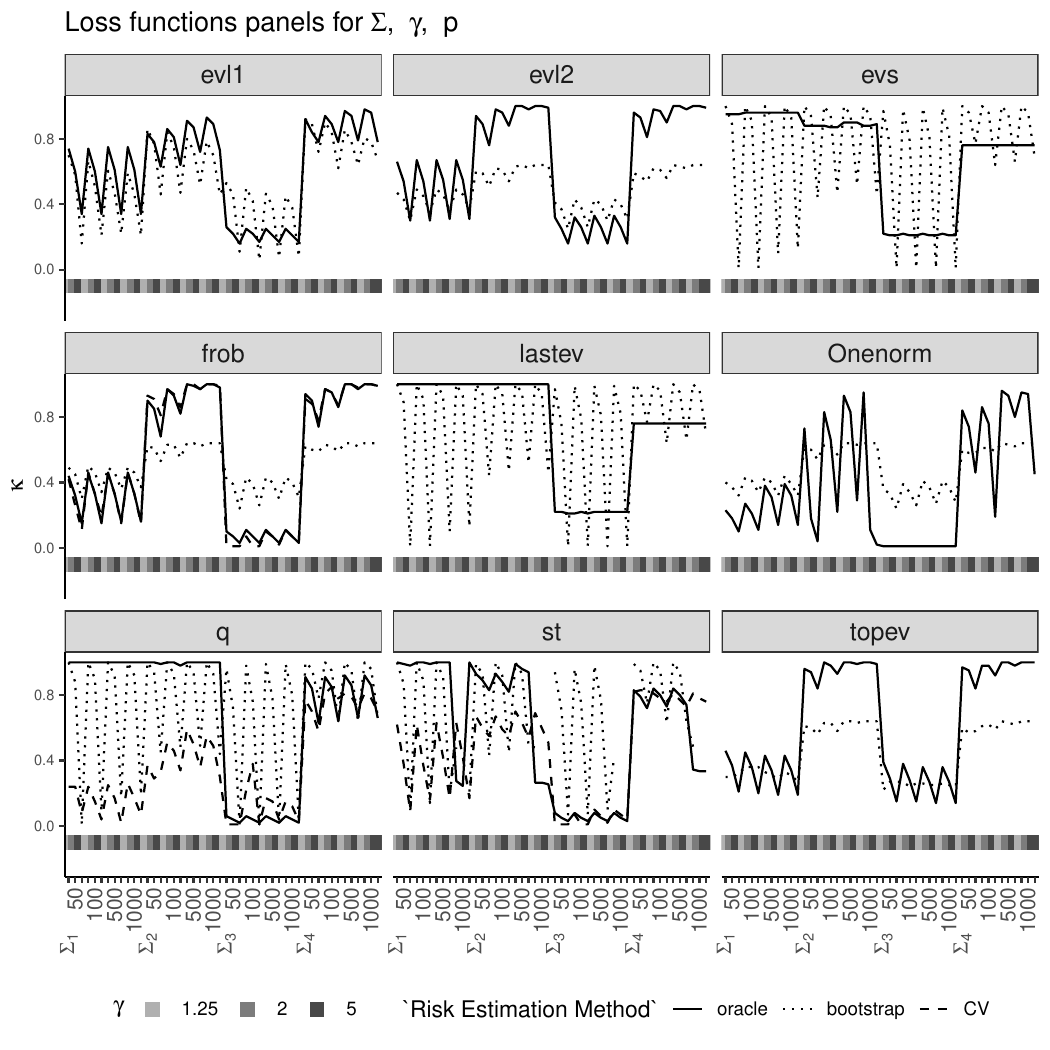}
\caption{\textbf{$\kappa$-selection for $\Sigma_1$ to $\Sigma_4$:} Chosen $\hat{\kappa}$ via bootstrap (dotted line) and CV (dashed line) are compared with oracle $\kappa'$ (solid line) with nine loss functions (each panel) for various combinations of true $\Sigma$ ($\Sigma_1$ to $\Sigma_4$), $\gamma=p/n$ and $p$ (see Section \ref{sec:simulation}).}
\label{fig:kappa1}
\end{figure}

\begin{figure}
\includegraphics[scale=1,page=2]{Kappa_bw_ArrangedByGamma.pdf}
\caption{\textbf{$\kappa$-selection for $\Sigma_5$ to $\Sigma_8$:} Chosen $\hat{\kappa}$ via bootstrap (dotted line) and CV (dashed line) are compared with oracle $\kappa'$ (solid line) with nine loss functions (each panel) for various combinations of true $\Sigma$ ($\Sigma_5$ to $\Sigma_8$), $\gamma=p/n$ and $p$ (see Section \ref{sec:simulation}).}
\label{fig:kappa2}
\end{figure}

\subsubsection*{Numerical comparison of $\kappa$-selecting methods}

We conducted a simulation study (see Section \ref{sec:SimStudy} for details) to evaluate these two strategies and compare the corresponding values of $\kappa$ with the ``oracle'' $\kappa'$. We considered nine different loss functions (see Section \ref{sec:LossF}), eight different covariance structures and $p=50, 100, 500, 1000$ with $\gamma = \frac{p}{n} = 1.25, 2, 5$. The results of this study are shown in Figures \ref{fig:kappa1} and \ref{fig:kappa2} where each panel corresponds to a loss function, the horizontal axis of each panel represents various combinations of the true covariance structure $\Sigma$, $\gamma$ and $p$,  and the vertical axis of each panel represents the chosen $\kappa$ for the two proposed methods (CV in dashed line and bootstrap in dotted line) and the ``oracle" $\kappa'$ (in solid line) which is determined using the true matrix $\Sigma$. Figure \ref{fig:kappa1} has true covariance structures $\Sigma_1$ to $\Sigma_4$ and Figure \ref{fig:kappa2} has true covariance structures $\Sigma_5$ to $\Sigma_8$. Note that the leave-one-out CV is only available for three out of the nine loss functions (i.e., Frobenius, Stein's and quadratic). The leave-one-out CV estimates the oracle $\kappa'$ almost perfectly for the Frobenius loss and also works well for Stein's and quadratic loss functions. The bootstrap method on the other hand works reasonably well in choosing $\kappa$ for most loss functions except the ones that depend on the smallest eigenvalues entirely. We have observed in our simulations that when the bootstrap estimate $\hat{\kappa}$ is quite different from the oracle $\kappa'$, the risk curves are quite flat,  i.e.,  $\hat{R}(\hat{\kappa})$ and $\hat{R}(\kappa')$ are quite similar and there is little risk improvement with different choices of $\kappa$.

As noted earlier, the choice of $\kappa$ depends largely on the reference estimator. When analyzing real data, we recommend the analyst to choose a loss function that is appropriate for the applied problem. If any of Stein's, quadratic or Frobenius (i.e., $st$, $q$ or $frob$) loss functions is an appropriate choice, we recommend using the CV-method to select $\kappa$ as it does not depend on a reference estimator. For any other loss function, we would recommend using other estimators as reference (e.g.,  the estimators of Ledoit and Wolf) in addition to $\hat{\Sigma}^1_{NS}$.

\section{Numerical Risk Comparisons with Other Estimators}\label{sec:simulation}
In this section we perform Monte Carlo simulations to evaluate our proposed estimator with respect to various loss functions.  Specifically, we   compare it  with the two non-linear shrinkage estimators of Ledoit and Wolf, one which is asymptotically optimal under Frobenius loss (LW1) and one which is  asymptotically optimal under Stein's loss (LW2). Ledoit and Wolf's nonlinear shrinkage  estimators  are of the form $H \hat{D} H^\top$, and thus orthogonally equivariant,  with $\hat{D}=\diag(\hat{\varphi}^*(\ell_1), \ldots \hat{\varphi}^*(\ell_p))$. The function $\hat{\varphi}^*$ is the nonlinear  function  responsible for shrinking the sample eigenvalues. Its form depends on which loss function is asymptotically minimized \citep{LWbernoulli}.  We refer the reader to   Section 3.2 of \cite{LWSpectrum} for  the specific form  of $\hat{\varphi}*$  when the loss is Frobenius and to Sections 5 and 6  of \cite{LWbernoulli} for its explicit form  when the loss is Stein's. In addition, we compare our eigenvalue estimates with  those of the Ledoit-Wolf consistent estimator of the population eigenvalues  \citep{LWSpectrum}, which we call LW3.  The comparison is carried out    under the loss functions  that only depend on eigenvalues:  loss functions 3, 4, 7, 8, and 9 in the Section \ref{sec:LossF} below.

\subsection{Loss Functions}\label{sec:LossF}
The comparison of the estimators  is carried  out  using  the Monte Carlo estimates of risk as defined in eq. (\ref{eq:risk}) with respect to the nine loss functions defined below.  We choose a variety of loss functions, some of which depend on the complete covariance matrix and its estimate, and others depend only on the eigenvalues and their estimates. To an applied statistician, the type of loss function is determined by the context of the application which could involve the estimation of eigenvalues, an important characteristic of the covariance matrix. Thus, the reader will get a sense of overall performance of an estimator under various loss functions. We also refer the reader to our previous paper \cite{imple} for a detailed description of most of these loss functions.

\begin{enumerate}
\item{Stein's (entropy) loss $st(\hat{\Sigma}, \Sigma) = {\rm tr} \,(\hat{\Sigma}\Sigma^{-1} -I) -\ln \det (\hat{\Sigma}\Sigma^{-1})$;}
\item{the quadratic loss $q(\hat{\Sigma}, \Sigma) ={\rm tr} \,(\hat{\Sigma}\Sigma^{-1} -I )^2$;  }
\item{$L_1$ eigenvalue loss $evl1(\hat{\Sigma}, \Sigma)=\sum_{i=1}^p|\hat{\lambda}_i-\lambda_i|/p$;}
\item{$L_2$ eigenvalue loss $evl2(\hat{\Sigma}, \Sigma)=\sum_{i=1}^p(\hat{\lambda}_i-\lambda_i)^2/p$;}
\item{Frobenius  loss  $frob(\hat{\Sigma}, \Sigma)=||\hat{\Sigma}-\Sigma||_F$, with $||A||_F^2 = {\rm tr}(AA^\top)$;}
\item{Matrix $L_1$-norm, the max of the $L_1$ norm of the columns of $|\hat{\Sigma}-\Sigma|$ or $||\hat{\Sigma}-\Sigma||_{1,1}, Onenorm(\hat{\Sigma},\Sigma)=
\operatornamewithlimits{max}\limits_{1 \leq j \leq p}  \sum\limits_{i=1}^{p} | \hat{\sigma}_{ij} - \sigma_{ij}|$;
}
\item{ $L_1$ loss on the largest eigenvalue $TopEV(\hat{\Sigma}, \Sigma)=|\hat{\lambda}_1-\lambda_1|$;}
\item{ $L_1$ loss on the smallest eigenvalue $LastEV(\hat{\Sigma}, \Sigma)=|\hat{\lambda}_p-\lambda_p|$;}
\item{ $L_1$ loss on the smallest quartile of the eigenvalues $EVS(\hat{\Sigma}, \Sigma)=\sum_{i=\lceil 3p/4 \rceil }^p|\hat{\lambda}_i-\lambda_i|$.}
\end{enumerate}

\noindent
We notice that, for our estimator $\hat{\Sigma}^{\kappa}$, the oracle $\kappa'$  is chosen as described in Section \ref{sec:kappa}.

\subsection{Simulation Study}\label{sec:SimStudy}
We construct eight covariance structures to represent typical applications. The matrix $\Sigma_1$ has widely spaced eigenvalues, $\Sigma_2$ has one large eigenvalue and mimics a typical principal components analysis covariance structure, $\Sigma_3$  is a time series example, $\Sigma_4$ is a spiked covariance structure, $\Sigma_5$ is the identity matrix, $\Sigma_6$ has eigenvalues drawn from a U-shaped beta distribution, $\Sigma_7$ has eigenvalues drawn from a linearly decreasing beta density and $\Sigma_8$ has eigenvalues of 1, 3 and 10 distributed with a frequency of 20\%, 40\% and 40\% respectively.  Namely,
 \begin{enumerate}
         \item $\Sigma_1 = \diag(p^2,\cdots,2^2,1^2)$;
        \item $\Sigma_2 = \diag(\lambda^{\ast}_{(1)},\lambda_{(2)},\cdots,\lambda_{(p)})$, where $\lambda_i \sim U(1,p/2)$, $\lambda_{(i)}$ are the ordered $\lambda_i$'s, $\lambda^{\ast}_{(1)}=\lambda_{(1)}^2$, with $U(a,b)$ being  the uniform distribution over the $[a,b]$ interval;
        \item $\Sigma_3 = AR(1)$, the first-order  autoregressive covariance matrix,  where $\sigma_{ij}=4 \times 0.7^{|i-j|}$ for $i \neq j$ and $\sigma_{ii}=4^2$ for $i=1,\cdots,p$;
        \item $\Sigma_4 = \diag(2p,p,1,\cdots,1)$;
        \item $\Sigma_5 = I_p$ where $I_p$ is the $p$-dimensional identity matrix;
        \item  $\Sigma_6 = \diag(\lambda_i)$ where $\lambda_i = 1 + 9F^{-1}_{(0.5,0.5)} \left( \frac{i}{p}-\frac{1}{2p} \right) \; i=1,\cdots,p$ and $F(\alpha,\beta)$ is the cumulative distribution function (c.d.f) of a beta distribution with parameters $(\alpha,\beta)$; the choice of $\alpha=0.5, \beta=0.5$ draws eigenvalues from a U-shaped density with more mass on large (around 10) and small (around 1) eigenvalues as the support of the beta density has been shifted to $[1,10]$;
        \item $\Sigma_7$ is similar to $\Sigma_6$ with $(\alpha=1,\beta=2)$ for the shape parameters of the beta c.d.f reflecting a linearly decreasing triangle with the highest density at $1$ and lowest density at $10$;
        \item $\Sigma_8$ has 20\% of its eigenvalues equal to 1, 40\% equal to 3 and the remaining 40\%  equal  to 10.

    \end{enumerate}

\noindent
For each covariance structure $\Sigma_j$ ($j=1,2,3,4,5,6,7,8$) four values of $p$ are considered $p=50,100,500,1000$.  For all eight covariance structures and their various dimensions, three values of $n$ are chosen corresponding to $p/n=\gamma=1.25,2,5$. We generate $n$ vectors $X_i \sim \mathcal{N}(0,\Sigma_j)$, $i=1,\cdots,n$,  for the first seven covariance structures. For the eighth case,  we  generate $n$ vectors from a $p$-variate Student $t$ distribution with four degrees of freedom so that $\Sigma_8$ is  used to test robustness to deviations from normality. We evaluate all nine loss functions, denoted henceforth by $st$, $q$, $evl1$, $evl2$, $frob$, $Onenorm$, $topev$, $lastev$ and $evs$, on our estimator and compare  the latter   with  LW1 (nonlinear shrinkage estimator that is optimal under Frobenius loss) and LW2 (nonlinear shrinkage estimator optimal under Stein's loss) (see Section \ref{sec:LossF}).  Additionally, we evaluate  the five loss functions ($evl1$, $evl2$, $topev$, $lastev$, $evs$)  that depend only on eigenvalues to compare our estimator with LW3 (the Ledoit-Wolf consistent estimator of population eigenvalues).  Risk estimates are based on 1000 repetitions for each simulation scenario.

\begin{figure}
\includegraphics[scale=1,page=4]{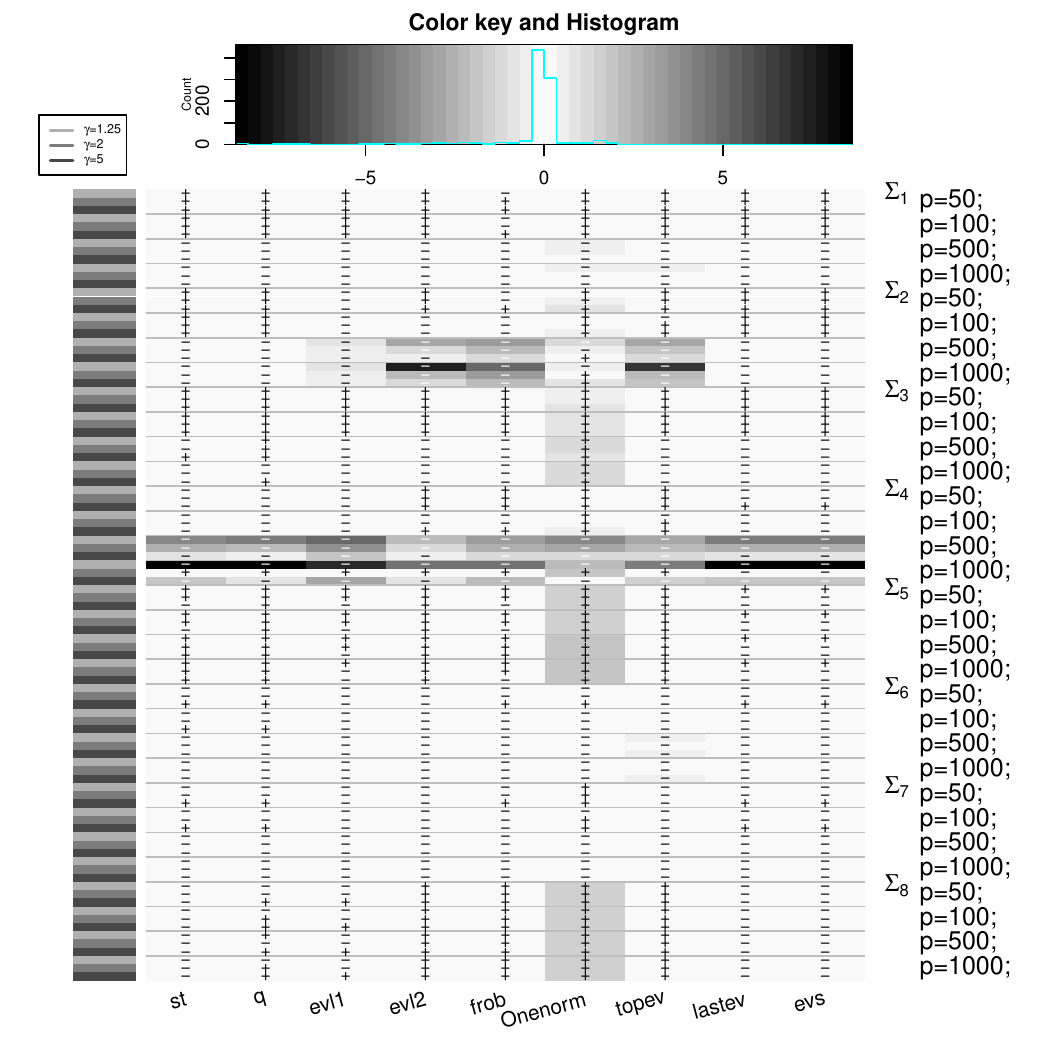}
\caption{\textbf{PRIAL comparison with LW1}. The heatmap shows values of PRIAL of $\hat{\Sigma}$ with respect to $\hat{\Sigma}^{LW_1}$ for various simulation scenarios based on covariance structure, values of $p$ and $\gamma$ (rows) and various loss functions (columns). Top panel shows counts and histogram of various PRIAL values. PRIAL values are scaled column-wise for visual clarity. Red (blue) shades mean $PRIAL>0$ ($PRIAL<0$) indicating our estimator is better (worse) than LW1.}
\label{fig:PRIAL_LW1}
\end{figure}

\begin{figure}
\includegraphics[scale=1,page=4]{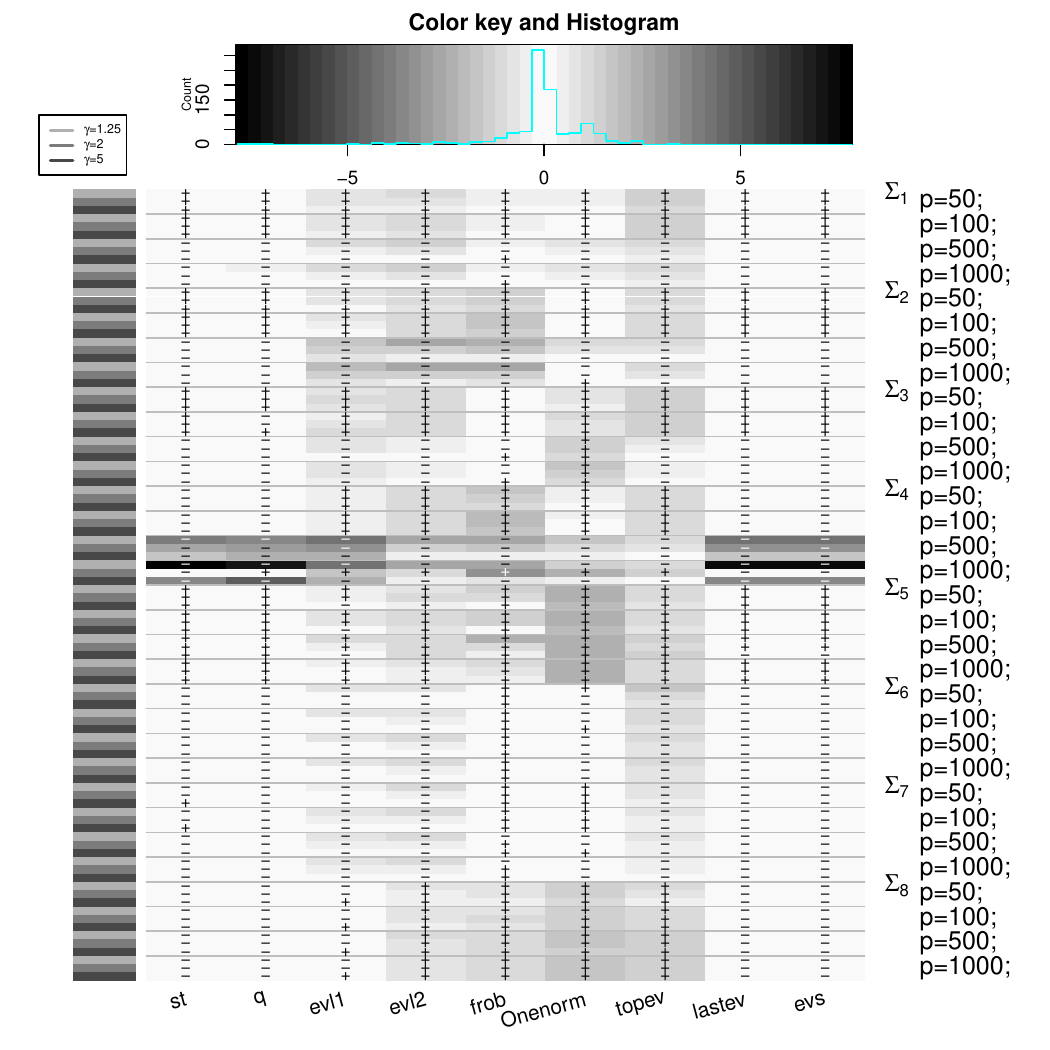}
\caption{\textbf{PRIAL comparison with LW2}. The heatmap shows values of PRIAL of $\hat{\Sigma}$ with respect to $\hat{\Sigma}^{LW_2}$ for various simulation scenarios based on covariance structure, values of $p$ and $\gamma$ (rows) and various loss functions (columns). Top panel shows counts and histogram of various PRIAL values. PRIAL values are scaled column-wise for visual clarity.  Red (blue) shades mean $PRIAL>0$ ($PRIAL<0$) indicating our estimator is better (worse) than LW2.}
\label{fig:PRIAL_LW2}
\end{figure}

\begin{figure}
\includegraphics[scale=1,page=4]{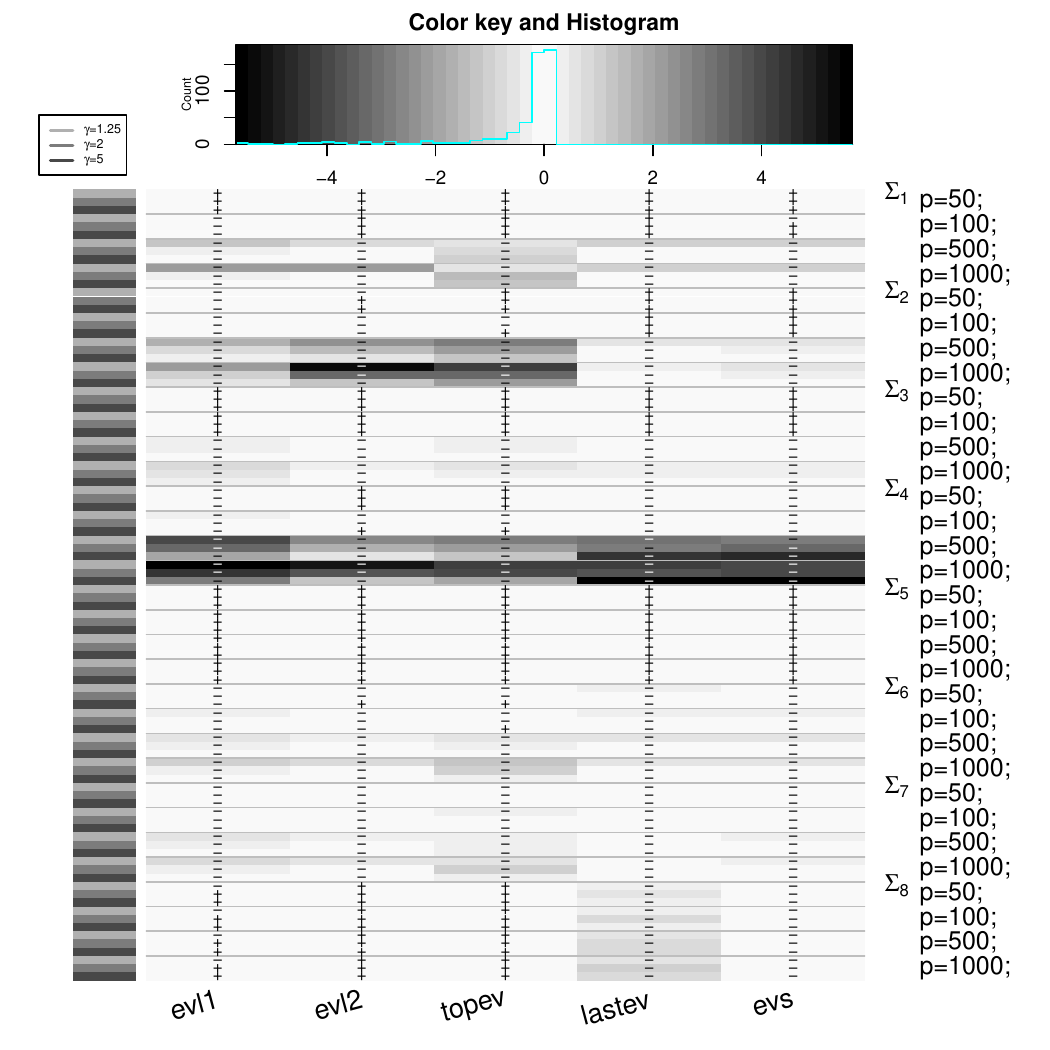}
\caption{\textbf{PRIAL comparison with LW3}. The heatmap shows values of PRIAL of $\hat{\lambda}$ with respect to $\hat{\lambda}^{LW_3}$ for various simulation scenarios based on covariance structure, values of $p$ and $\gamma$ (rows) and various loss functions (columns). Top panel shows counts and histogram of various PRIAL values. PRIAL values are scaled column-wise for visual clarity.  Red (blue) shades mean $PRIAL>0$ ($PRIAL<0$) indicating our estimator is better (worse) than LW3.}
\label{fig:PRIAL_LW3}
\end{figure}

As a measure of comparison, we use the Proportion Reduction in Integrated Average Loss  (PRIAL)  of  our estimator $\hat{\Sigma}$ over  $\hat{\Sigma}^{LW_k}$, which is LW1 ($k=1$) or LW2 ($k=2$).   Namely,  for a loss function  $L(.)$, PRIAL is defined similarly to    that in \cite{LiuPerl}:
\begin{equation}\label{eq:PRIAL}
 \left( \sum_i L(\hat{\Sigma}^{LW_k}_i,\Sigma)-\sum_i L(\hat{\Sigma}_i,\Sigma) \right) / \sum_i L(\hat{\Sigma}^{LW_k}_i,\Sigma)
\end{equation}
 where the sum (over $i$) is over all datasets. For LW3, we replace  $\hat{\Sigma}^{LW_3}$ with  $\hat{\lambda}^{LW_3}$, $\hat{\Sigma}$ with $\hat{\lambda}$, and $\Sigma$ with $\lambda$ in (\ref{eq:PRIAL}) to compute the PRIAL.   Figure \ref{fig:PRIAL_LW1}, \ref{fig:PRIAL_LW2} and \ref{fig:PRIAL_LW3} are heatmaps of PRIAL with respect to LW1, LW2 and LW3 respectively. The rows correspond to various simulation scenarios (covariance structure, $p$ and $\gamma$) just described  and the columns correspond to various loss functions, described in Section \ref{sec:LossF}.  The red (blue) shades in the heatmap indicate positive (negative) PRIAL and the white represents zero PRIAL. The more intense the hue is,  the larger the absolute value of the PRIAL.  A positive PRIAL means our estimator compares favorably with LW1/LW2/LW3, negative PRIAL means the opposite and zero PRIAL means the estimators are comparable. Our estimator has positive PRIAL in 40.4\%, 45.7\% and 36.9\% of all simulation scenarios when compared to LW1, LW2 and LW3 respectively.

It seems  difficult to arrive at a general conclusion on which estimator (LW1, LW2,  LW3 or ours) is preferable for which covariance structure or which loss function although they perform similarly for the majority of scenarios.  Since LW1 is asymptotically optimal under the Frobenius loss function, one should expect LW1 to be  better than our estimator with the Frobenius loss. Figure \ref{fig:PRIAL_LW1} confirms this,  except  in the case in which the true covariance matrix is the identity ($\Sigma_5$) and in the case of $t$-distributed data with covariance matrix $\Sigma_8$  when our estimator has an advantage.  The  exception with the identity matrix is also expected because our approximate solution with $\kappa=0$ is a true maximum of the marginal likelihood function. For $p \geq 500$, LW1 outperforms our estimator when the true structure is spiked ($\Sigma_4$) under all loss functions and when the true structure represents a PCA-like situation ($\Sigma_2$) under $frob$, $Onenorm$ and $topev$ loss functions.  It is also expected that LW2, which is optimal under Stein's loss function, would outperform our estimator using Stein's loss. However, we observe in Figure \ref{fig:PRIAL_LW2} that, under the Stein's loss function, LW2 is better than our estimator only for the spiked covariance structure ($\Sigma_4$). On the other hand, our estimator tends to outperform LW2 with $\Sigma_8$ (robust case) and $\Sigma_5$ (identity) under multiple loss functions. If we compare estimators  with respect to the true covariance matrix over all loss functions, LW1  and LW2 are better than our estimator  for the spiked covariance structure ($\Sigma_4$), while our estimator is  better than  LW1 and LW2  when data are generated from a non-normal distribution with fat-tails ($\Sigma_8$)  and  it seems preferable to LW1 and  LW2, when the  true covariance matrix is  the identity ($\Sigma_5$).  When we compare our eigenvalue estimators with LW3 (Figure \ref{fig:PRIAL_LW3}), we see a similar pattern: the estimators perform comparably for the majority of scenarios, with LW3 outperforming our eigenvalue estimates for the spiked covariance structure ($\Sigma_4$) when $p \geq 500$ under all eigenvalue loss functions and for $\Sigma_2$ when $p \geq 500$ under the $evl2$ and $topev$.

\section{Linear Discriminant Analysis (LDA) on  Breast Cancer and Leukemia  Data} \label{sec:LDA}
In this section we apply our estimator to  two-class classification problems  using LDA in  breast cancer  and leukemia data. Specifically, we plug-in our estimator (and also LW1 and LW2) for the common covariance matrix of both classes in the discriminant function of LDA.
In the first application we consider  a  breast cancer  dataset. \cite{hessBCa} proposed a 31-probeset multigene predictor of pathologic complete response (pCR) to chemotherapy in an experiment with 133 patients with stage I-III breast cancer. Following \cite{hessBCa}, we split the samples into a training set of size 82 and a test set of size 51. We develop our classifier on a subset of the training data so that $n<p=31$ by randomly selecting 20 patients and preserving the ratio of the two classes.  We then evaluate and compare discrimination metrics, such as misclassification rate (MCR), sensitivity (Sens) and specificity (Spec), of the classifier that uses our estimator with those of the classifiers that use LW1 and LW2 as the plug-in estimators for the common-covariance matrix. The comparisons are presented in Table \ref{tab:BreastCancer}. To choose $\kappa$ for our estimator $\hat{\Sigma}^\kappa$, we follow the procedures described in Section \ref{sec:kappa} with nine loss functions and $\hat{\Sigma}^1_{NS}$ as the reference estimator for  the bootstrap-based method and the three loss functions for  the CV-based method. The combination of the $\kappa$-selection method and the corresponding loss function that was chosen is described in Table \ref{tab:BreastCancer}.  In addition, we also use LW1 and LW2 as the reference estimators in determining $\kappa$ for the bootstrap-based method. Our estimator has comparable but higher MCR and lower sensitivity and specificity compared to LW1 and LW2 when we use $\hat{\Sigma}^1_{NS}$ as the reference estimator to choose $\kappa$. However, when we use LW1 as the reference estimator our estimator improves MCR by 4\%, sensitivity by  7.7 \% and specificity by 2.6\% compared with LW1. Similarly, when using LW2 as the reference estimator in choosing $\kappa$, our estimator improves MCR by 15.7\% and specificity by 21\%. The estimator identified by the $\kappa$  that minimizes the criterion $\hat{q}_{\kappa}$ via cross-validation as described in Section \ref{sec:kappa} also performs well.

\begin{table}[htbp]
  \centering
  \caption{Error rates for LDA analysis of breast cancer data}
    \begin{tabular}{llrrr}
\toprule
    Estimator & Reference ($\bar{\Sigma}$) & \multicolumn{1}{l}{MCR} & \multicolumn{1}{l}{Sens} & \multicolumn{1}{l}{Spec} \\
\midrule
    LW1   & -    & 0.275 & 0.385 & 0.842 \\
    LW2   & -    & 0.333 & 0.538 & 0.711 \\
    $\tilde{\Sigma}$ (Boot-frob) & $\tilde{\lambda}^{NS}$ & 0.392 & 0.538 & 0.658 \\
    $\tilde{\Sigma}$ (Boot-st ) & LW1   & 0.235 & 0.462 & 0.868 \\
    $\tilde{\Sigma}$ (Boot-OneNorm) & LW2   & 0.176 & 0.538 & 0.921 \\
   $\tilde{\Sigma}$ (CV-q) & -  & 0.216 & 0.538 & 0.868 \\
\bottomrule
    \end{tabular}%
  \label{tab:BreastCancer}%
\end{table}%

In the second application we  analyze   the  leukemia data set of \cite{golub}, consisting of gene expression measurements on 72 leukemia patients, 47 ALL  and  25 AML.   We  retain  $p=3571$ of the $7148$  gene expression levels for the analysis. This smaller  data set can be  downloaded from T. Hastie's  website \url{https://web.stanford.edu/~hastie/CASI_files/DATA/leukemia.html}.  A training set and a validation set   are then obtained by randomly selecting samples from the two classes but  preserving the proportion of the classes as in the original data, with the  training set   comprising  31 ALL and  17 AML patients.    We consider all LDA-classifiers as described in the breast cancer example,  which employ our estimator, LW1, and LW2. All these estimators achieve  perfect classification of the samples in the validation set.

\section{Summary and Conclusion}
Estimation of the covariance matrix is encountered in many statistical problems and has received much attention recently.  When $p$ is comparable to $n$ or even greater, the sample covariance matrix is a poor and ill-conditioned estimator primarily due to an overspread eigenspectrum. Several alternative estimators have been considered in the literature for such scenarios, some of which are asymptotically optimal with respect to certain loss functions and others are derived under strong structural assumptions on the covariance matrix (e.g.,  sparsity).   Often,  estimators are  valid in the regime in which  both $n$ and $p$ go to infinity in such a way that their ratio is finite.

In this paper, we consider the class of orthogonally equivariant estimators and propose an estimator that is valid  when $p>n$. This work is an extension of our previous work on equivariant estimation when $p<n$. Equivariance under orthogonal transformations reduces   the problem of estimating the covariance matrix to that of the estimation of its eigenvalues. To this end, we find a modification of the  profile likelihood function of eigenvalues by integrating out the sample eigenvectors. The integration result is approximate and valid for large $p$. The  critical  point of this pseudo-likelihood function,  a maximum under certain conditions, is in an estimator $\hat{\lambda}^0$ with all components equal,  thereby resulting in extreme shrinkage. To get distinct eigenvalue estimates, we perturb $\hat{\lambda}^0$ by introducing an approximate solution $\hat{\lambda}^1$ to the likelihood equations along with a tuning parameter $\kappa$. The tuning parameter, $\kappa \in [0,1)$, is selected by minimizing  the  risk, with respect to a loss function. We can find estimates of the risk using a bootstrap re-sampling scheme, which can be applied to any  problem with any loss function. The $\kappa$ selected with this method depends on the choice of a reference estimator, necessary to evaluate the loss function. Our estimator improves risk when a good estimator is employed as a reference estimator. Alternatively, a cross-validation estimate of the risk can be used, which was implemented for Frobenius, quadratic, and Stein's loss functions. We compare finite sample properties of our proposed estimator with  two  covariance matrix estimators of Ledoit and Wolf (Figure \ref{fig:PRIAL_LW1} and \ref{fig:PRIAL_LW2}) using Monte Carlo estimates of risk with respect to nine different loss functions and eight different covariance structures. We also compare the estimates of the population eigenvalues obtained by our method with those of  a consistent estimator for population eigenvalues proposed by Ledoit and Wolf (Figure \ref{fig:PRIAL_LW3}).  Furthermore, we  demonstrate in a real breast cancer example that our estimator can substantially improve risk.

The encouraging finite sample properties of our estimator reported here suggest that our method of constructing an orthogonally equivariant estimator on the marginal distribution of the sample eigenvalues may provide improved estimation of the covariance matrix, which is needed in many statistical applications.

\section*{Acknowledgments}
SB was supported by NIMH (National Institute of Mental Health) grant number P50 MH113838 for this work. Many thanks to Yiyuan Wu and Elizabeth Mauer for their assistance with the Figures.

\newpage
\section*{Appendix A}
In this appendix we  prove Proposition \ref{prop:eq:largep}. To do this, we first  review a result that is fundamental  in deriving the approximation.
Consider the integral
\[
I = \int_{O(p)} {\rm etr}( H X  H^{-1}  Y)(dH)
\]
where $(dH)$ is the   Haar measure of the group $O(p)$,   $X$ and $Y$  are $p \times p$  symmetric matrices with eigenvalues $x=(x_1, \ldots, x_p)$ and $y=(y_1, \ldots, y_p)$ respectively.   In  \cite{hikami-brezin}, this integral and some of its generalizations (to the unitary and symplectic groups),   known often as   Harish-Chandra-Itzykson-Zuber integrals, are   expressed in a form that involves the variables $\tau_{ij}=(x_i-x_j)(y_i-y_j)$, using the expansion about the saddle points of the integrand.  Specifically, for the orthogonal group,
\begin{equation}\label{eq:start}
 I={\rm e}^{\sum_{j=1}^p x_jy_j } \cdot f(\tau),
\end{equation}
where  the matrix $\tau$  is the symmetric matrix  with entries $\tau_{ij}$. The function  $f$  can be  expanded as a power series of $\tau$
\[
f=1+f_1+ \ldots+  f_r +O(\tau^{r+1})
\]
with the term $f_s$ being of order $s$ in $\tau$.    The solution (\ref{eq:start})  is obtained  from  a differential equation obeyed by the integral $I$.  The power series in $\tau$  for the integral $I$   is also  obtained from the expression of $I$ in terms of zonal polynomials. The term $f_s$ of the expansion  of $f$ is a  polynomial of degree $s$  in the $\tau$ variables. Each monomial  enjoys a graphical representation, as a graph that has $p$ nodes   and  an edge that connects  two nodes  $i,j$  if the  variable $\tau_{ij}$ appears in the monomial. In particular,   $\tau_{ij}^q$  is represented as  an edge of multiplicity $q$, that is,  as $q$ lines between the  nodes $i,j$. The total number of lines in such graph, and thus  the sum of the multiplicities of  all the edges, is $s$. The details can be found in \cite{hikami-brezin}, but a few example can make the discussion clearer.  In the term of order 1,
\[
f_1(\tau)=-\frac{1}{p}\sum_{i<j}\tau_{ij},
\]
each monomial $\tau_{ij}$ is represented by a graph  in which  two distinct nodes $i, j$ are joined by a simple (i.e., with multiplicity 1) edge and the remaining  nodes are singletons. Terms of  order $2$
\[
f_2(\tau) = \frac{3}{2p(p+2)}\sum_{i<j}\tau_{ij}^2 +\frac{1}{p(2+p)}\sum_{i,j,k}\tau_{ij}\tau_{jk}+\frac{p+1}{(p-1)p(p+2)} \sum_{i,j,k,l}\tau_{ij}\tau_{lk},
\]
are represented  by three (types of) graphs with two edges: the first sum is associated with  graphs  that consist of $p-2$ singletons and two nodes  joined by an edge of multiplicity two;  the second sum with graphs with $p-3$ singletons  and a connected component with 3 nodes and two simple  edges; the third sum  with  graphs  with $p-4$ singletons and two connected components, each having  two nodes and one simple edge.   The coefficients  $c_s$
of  the  monomials of  degree $s$ in $\tau$ have different forms in general. However, for large $p$  all coefficients are of the same form \citep{hikami-brezin}. Namely,
\[
c_s=(-1)^s\frac{g}{\prod_{t=0}^{s-1} (p+2t)} \left(1+O\left(\frac{2}{p}\right)\right),
\]
where $g$ is the degeneracy factor  due to  the multiplicities of the edges in the graph. For a graph with $q$ edges with multiplicities $(q_1, \ldots, q_q)$, the degeneracy is
\[
g=\prod_{i=1}^q \frac{1}{q_i!}\prod_{m=1}^{q_i-1}(1+2m)=\prod_{i=1}^q {2q_i \choose q_i}\frac{1}{2^{q_i}}.
\]
For example, the coefficients  $c_2$ for the monomials of degree 2 in  $f_2(\tau)$ above are
in the large $p$ limit $3/2p^2$ for the first  and $1/p^2$ for the other two. Because of this formula, one can  obtain the following large $p$ expression for $f$
\[
\prod_{i<j}\left( 1+\frac{2}{p}\tau_{ij} \right)^{-1/2}.
\]
The details of its derivation are omitted in  \cite{hikami-brezin}. We present them here. To make the notation more compact, let  us use a  bold symbol, ${\bf a}$, say,  for an unordered pair of integers $(i_a, j_a)$. In other words, we are using a vector notation for the  $P=p(p-1)/2$ distinct elements of  $\tau$, whose diagonal elements are zero. By applying the Taylor expansion of the inverse of the square root,
we find
\begin{eqnarray}\label{eq:taylor}
\prod_{i<j}\left( 1+\frac{2}{p}\tau_{ij} \right)^{-1/2}&=& \prod_{{\bf a}=1}^P\left( 1+\frac{2}{p}\tau_{\bf a} \right)^{-1/2} \nonumber\\
&=& \prod_{{\bf a}=1}^P\sum_{k=0}^\infty (-1)^k{2k \choose k}\frac{1}{2^k}\left(\frac{\tau_{\bf a}}{p}\right)^k\nonumber\\
&=&\sum_{s=0}^\infty \frac{(-1)^s}{p^s}\frac{1}{2^s} \sum_{\bf s}{2s_1 \choose s_1} \cdots {2s_P \choose s_P} \tau_{\bf 1}^{s_1}\cdots \tau_{\bf P}^{s_P},
\end{eqnarray}
where the last sum is over the vectors ${\bf s}=(s_1, \ldots, s_P)$ such that $0 \leq s_i \leq s$, with $\sum_{i=1}^Ps_i=s$.
Noticing that  singletons in the graph just contribute 1, one now recognizes immediately  that  in (\ref{eq:taylor}) for each $s=0, 1, \ldots$, the sum is over all possible graphs with $p$ nodes and with at most $s$  edges whose multiplicities sum to $s$. These are exactly the graphs associated with  the expansion of $f$.
Since the Taylor series  we have employed is convergent  when $-1 < 2\tau_{ij}/p<1$, the above result assumes
$0 \leq  2\tau_{ij}<p$ (since $\tau_{ij}$ is non-negative).  We can thus write (\ref{eq:start}) for large $p$  as
\begin{equation}\label{eq:asympt:hciz}
I\approx {\rm e}^{\sum_{i=1}^p x_iy_i } \cdot \prod_{i<j}^p\left( 1+ \frac{2}{p} \tau_{ij}\right)^{-1/2}.
\end{equation}
We use the approximation  (\ref{eq:asympt:hciz}) in the following
\begin{proof}[Proof of Proposition   \ref{prop:eq:largep}]
We follow    some steps as in Theorem 9.5.4 of  \cite{muirhead}. Consider first  the integral
\begin{equation*}
I(\ell)=\int_{ O(p)} {\rm etr}\left(-\frac{1}{2} \tilde{L} H^\top\Sigma^{-1} H\right) \, (H^\top dH)
\end{equation*}
where $H\in O(p)$,    $(H^\top dH)$  is the  Haar measure in $O(p)$ (whose integral gives the volume of $O(p)$) and $\tilde{L}={\rm diag}(\ell_1, \ldots, \ell_n, \ell, \ldots, \ell)$ with   $\ell$  repeated  $(p-n)$ times.  Partition $H=[H_1:H_2]$, where  $H_1 \in V_n({\mathbb R}^p)$ and thus $H_2$  is a matrix of order $p \times (p-n)$ whose columns are orthonormal and orthogonal to those of $H_1$. Since $H_2H_2^\top  =I_p -H_1H_1^\top$, we have
\begin{eqnarray*}
{\rm tr} (\tilde{L} H^\top \Sigma^{-1} H) &=& {\rm tr} \left (L H_1^\top \Sigma^{-1} H_1\right) + \ell \cdot {\rm tr}\left(H_2^\top \Sigma^{-1} H_2\right)\\
&=& {\rm  tr} \left( (L-\ell\,I_n) H_1^\top \Sigma^{-1} H_1\right) + \ell \, {\rm tr}\Sigma^{-1}
\end{eqnarray*}
where $L={\rm diag} (\ell_1, \ldots, \ell_n)$.
Using lemma 9.5.3 in \cite{muirhead}, it follows
\[
I(\ell)= {\rm Vol} \left(O(p-n)\right)\, {\rm etr} \left(-\frac{1}{2}\, \ell \cdot \Sigma^{-1} \right) \cdot J_n(\ell)
\]
where ${\rm Vol}\,\left(O(k)\right) = \frac{ 2^k \pi^{k^2/2}}{ \Gamma_{k} ( \frac{1}{2}k )}$  and
\[
J_n(\ell)=\int_{V_n({\mathbb R}^p)} {\rm etr}\left(-\frac{1}{2}(L-\ell\cdot I_n) H_1^\top \Sigma^{-1} H_1\right) (H_1^\top dH_1).
\]

Since the integrand function is bounded and $V_n({\mathbb R}^p)$ is compact,  in the limit $\ell\to 0$, we recover the integral of interest
\[
J_n= \lim_{\ell \to 0} J_n(\ell) = \frac{1}{{\rm Vol}\,\left(O(p-n)\right)} \, \lim_{\ell \to 0}I(\ell)
\]
with $\ell_1>\ell_2> \ldots > \ell_n>0$.
Applying the large-$p$  form (\ref{eq:asympt:hciz}) of the integral, we obtain
\[
I(\ell) \approx {\rm Vol}\,\left(O(p)\right) \cdot \exp\left( -\frac{1}{2}\sum_{i=1}^p \frac{ \tilde{\ell}_i}{\lambda_i} \right) \cdot \prod_{i<j}^p \left( 1 + \frac{1}{p} \left(\frac{\lambda_i -\lambda_j}{\lambda_i \lambda_j}\right)(\tilde{\ell}_i - \tilde{\ell}_j)\right)^{-1/2}
\]
and hence the result,  ignoring the ratio   $\frac{{\rm Vol}\,\left(O(p)\right)}{{\rm Vol}\,\left(O(p-n)\right)}={\rm Vol}\,\left( V_n({\mathbb R}^p)\right)$.
\end{proof}

\section*{Appendix B}
\begin{proof}[Proof of Proposition \ref{prop:properties}  (The  properties of the eigenvalue estimates)]
\begin{itemize}
\item[]
\item[a)]{
Let $a=1, \ldots, n$, and define $\psi_{a,b}= p+n(\ell_a-\ell_b)^2/\ell_a\ell_b $ and  $\ell_a-\ell_{a+1}=d_a$. Clearly,  $d_a > 0$  and $\psi_{ab} > p$. Then
\begin{eqnarray*}
n \hat{\lambda}^1_a &=& \ell_a-  \sum_{b=1}^n \frac{\ell_a - \ell_b}{  p +n \left(\frac{1}{\ell_b} -\frac{1}{\ell_a}\right)(\ell_a - \ell_b) }=\ell_{a+1}+d_a -\sum_{b=1}^n  \frac{\ell_a -\ell_{a+1} +\ell_{a+1}- \ell_b}{ \psi_{a,b}} \\
&>& \ell_{a+1}+d_a\left(1 -\frac{n}{p} \right)  -\sum_{b=1}^n  \frac{\ell_{a+1}- \ell_b}{ \psi_{a,b}} \\
& \geq & \ell_{a+1} -\sum_{b=1}^{a}  \frac{\ell_{a+1}- \ell_b}{ \psi_{a,b}} -\sum_{b=a+2}^n  \frac{\ell_{a+1}- \ell_b}{ \psi_{a,b}}  \\
& \geq & \ell_{a+1} -\sum_{b=1}^{a}  \frac{\ell_{a+1}- \ell_b}{ \psi_{a+1,b}} -\sum_{b=a+2}^n  \frac{\ell_{a+1}- \ell_b}{ \psi_{a,b}}  \\
& \geq & \ell_{a+1} -\sum_{b=1}^{n}  \frac{\ell_{a+1}- \ell_b}{ \psi_{a+1,b}} +\sum_{b=a+2}^{n}  \frac{\ell_{a+1}- \ell_b}{ \psi_{a+1,b}}       -\sum_{b=a+2}^n  \frac{\ell_{a+1}- \ell_b}{ \psi_{a,b}}  \\
&=& n \hat{\lambda}^1_{a+1} +\sum_{b=a+2}^{n} (\ell_{a+1}- \ell_b) \left( \frac{1} { \psi_{a+1,b}}-  \frac{1}{ \psi_{a,b}} \right) \\
&\geq & n \hat{\lambda}^1_{a+1}  >0
\end{eqnarray*}
where  we have used the fact that $\psi_{a+1,b} > \psi_{a,b}$ for $b \leq a$ and $\psi_{a,b} > \psi_{a+1,b}$ for $b \geq a +1$.   Hence  $\hat{\lambda}^{\kappa}_a  > \hat{\lambda}^{\kappa}_{a+1}$ for all $\kappa \in [0,1]$, and  $ \hat{\lambda}^{\kappa}_{n} > \hat{\lambda}^{\kappa}_{r}$, $r=n+1, \ldots, p$.}
\item[b)]{It follows immediately from the fact that $ \sum_{b,a=1}^n \frac{\ell_a - \ell_b}{  p +n \left(\frac{1}{\ell_b} -\frac{1}{\ell_a}\right)(\ell_a - \ell_b) }=0$.
}
\end{itemize}
\end{proof}

\section*{Appendix C}
In this appendix,  using standard arguments  \citep{muirhead},   we  compute the  profile log-likelihood for the eigenvalues $\lambda$ of the covariance matrix $\Sigma$.
Let $\Sigma=V \Lambda V^\top$ be the spectral decomposition of $\Sigma$.  The log-likelihood function  obtained from
(\ref{uhlig:density}) is
\begin{eqnarray*}
 {\mathcal L} (\Lambda, V) \equiv  {\mathcal L} (\Lambda, V |L,H_1) &=& -\frac{n}{2}\sum_{i=1}^p \ln \lambda_i  -\frac{1}{2}{\rm tr}\left(\Lambda^{-1} V^\top H_1 L  (V^\top H_1)^\top  \right)
\end{eqnarray*}
Since ${\rm tr}\left(\Lambda^{-1} A L  A^\top  \right) \geq   \sum_{i=1}^n\frac{\ell_i}{\lambda_i}$,  when $A$ satisfies the condition $A^\top A=I$,  with equality  when $A$ is one of the  $2^n$ matrices $M$ of dimensions $p \times n$  with components $M_{ij}=\pm \delta_{ij}$, where $\delta_{ij}$ is the Kronecker delta, we obtain
\[
 {\mathcal L} (\Lambda, V)  \leq  -\frac{n}{2}\sum_{i=1}^p \ln\lambda_i -\frac{1}{2}\sum_{i=1}^n\frac{\ell_i}{\lambda_i}
\]
and thus  the  expression on the right-hand side is  the  profile log-likelihood   ${\mathcal L}_P (\Lambda) ={\mathcal L} (\Lambda, \hat{V}_{\Lambda})$.
Thus the maximizer  $\hat{V}_{\Lambda}$  of the  log-likelihood for a fixed value of $\Lambda$ is a solution to $V^\top H_1=M$. Since $V$ is orthogonal,  then  $\hat{V}=[H_1M_n : H_2 P]$, where $M_n$ is  the $n \times n$ matrix of the first $n$ rows of $M$ and   $P  \in O(p-n)$ any orthogonal matrix.

\section*{Appendix D}
In this appendix we extend our algorithm to   the case in which  ${\rm rank}(S)=q \leq  n$, with  $q$  distinct positive eigenvalues.
In the rank-$q$  case, the density of $S$ and  the measure  are  obtained from those in the maximum rank case by replacing $n$  with $q$ \citep{DG_GJ_Mardia}. Namely,
\[
p(S)(dS) =K_q \left({\rm det} \Sigma \right)^{-q/2} {\rm etr}\left(-\Sigma^{-1}S/2 \right)\left({\rm det} L \right)^{(q-p-1)/2} (dS),
\]
where $S=H_1LH_1^\top$, with $L=\diag(\ell_1, \ldots, \ell_q)$,  and the volume form  written in terms of   the  Haar measure  $(H_1^\top dH_1)$  on $V_q({\mathbb R}^p)$ is
\[
(dS)= 2^{-q} \prod_{i=1}^q \ell_i^{p-q} \prod_{i<j}^q(\ell_i-\ell_j) (H_1^\top dH_1)  \wedge d\ell_1  \wedge \cdots  \wedge d\ell_q.
\]
Accordingly, all non-maximum rank formulae, from the marginal density of the
eigenvalues   to the ML  equations,    are  obtained by replacing $n$ with $q$ in the corresponding maximum rank equations. Thus,  an exact  solution to the  ML equations   gives all estimates   to be  $\sum_{a=1}^q\ell_a/pq$, and  our proposed estimates have the form
\begin{eqnarray*}
\hat{\lambda}^\kappa_a   & =& \frac{ \kappa}{q} \left(\ell_a -   \sum_{b=1}^q \frac{\ell_a - \ell_b}{  p +q \left(\frac{1}{\ell_b} -\frac{1}{\ell_a}\right)(\ell_a - \ell_b) } \right)+ (1- \kappa) \frac{ \sum_{b=1}^q  \ell_b}{pq}, \,  a=1, \ldots, q\\
 \hat{\lambda}^\kappa_r &=&  (1- \kappa) \frac{ \sum_{b=1}^q  \ell_b}{pq}, {\hskip 6 cm} r=q+1, \ldots, p
\end{eqnarray*}
with $0 \leq \kappa<1$. Such estimates can be shown to be positive and ordered  by following the same steps as in the proof of  Proposition \ref{prop:properties}.

\bibliographystyle{apalike}
\bibliography{CovRefsP}

\end{document}